\documentclass{amsart}
\usepackage[normalem]{ulem}
\usepackage[all]{xy}
\usepackage{verbatim}
\usepackage{color}
\usepackage{amsthm}
\usepackage{amsmath}
\usepackage{amssymb}
\usepackage[colorlinks=true]{hyperref}




\setcounter{equation}{0}

\numberwithin{equation}{section}

\newtheorem{theorem}[equation]{Theorem}
\newtheorem*{theorem*}{Theorem} \newtheorem{lemma}[equation]{Lemma}

\newtheorem*{conjecture*}{Mamma Conjecture}
\newtheorem*{conjecture1*}{Mamma Conjecture (revisited)}
\newtheorem{proposition}[equation]{Proposition}

\newtheorem*{corollary*}{Corollary}

\theoremstyle{remark}

\newtheorem{example}[equation]{Example}

\theoremstyle{remark}
\newtheorem{remark}[equation]{Remark}

\setcounter{tocdepth}{1}

\newcommand{\cA}{{\mathcal A}}
\newcommand{\cB}{{\mathcal B}}
\newcommand{\cC}{{\mathcal C}}
\newcommand{\cD}{{\mathcal D}}
\newcommand{\cE}{{\mathcal E}}
\newcommand{\cF}{{\mathcal F}}
\newcommand{\cG}{{\mathcal G}}
\newcommand{\cH}{{\mathcal H}}

\newcommand{\cN}{{\mathcal N}}
\newcommand{\cO}{{\mathcal O}}

\newcommand{\Spt}{\mathrm{Spt}}

\newcommand{\bbG}{\mathbb{G}}

\newcommand{\bbN}{\mathbb{N}}

\newcommand{\bbZ}{\mathbb{Z}}


\DeclareMathOperator{\id}{id}



\DeclareMathOperator{\Fun}{Fun} 


\newcommand{\dgcat}{\mathrm{dgcat}} 


\newcommand{\perf}{\mathrm{perf}}

\newcommand{\dg}{\mathrm{dg}}

\newcommand{\Hom}{\mathrm{Hom}}
\newcommand{\End}{\mathrm{End}}

\newcommand{\rep}{\mathrm{rep}}

\newcommand{\dgHo}{\mathrm{H}^0}

\newcommand{\Ho}{\mathrm{Ho}}

\newcommand{\Hmo}{\mathrm{Hmo}}
\newcommand{\op}{\mathrm{op}}

\newcommand{\too}{\longrightarrow}


\newcommand{\ie}{\textsl{i.e.}\ }

\newcommand{\U}{\mathrm{U}}

\newcommand{\UU}{\mathbb{U}}

\let\oldmarginpar\marginpar
\def\marginpar#1{\oldmarginpar{\tiny #1}}

\begin{document}

\title[Motivic concentration theorem]{Motivic concentration theorem}
\author{Gon{\c c}alo~Tabuada and Michel Van den Bergh}

\address{Gon{\c c}alo Tabuada, Department of Mathematics, MIT, Cambridge, MA 02139, USA}
\email{tabuada@math.mit.edu}
\urladdr{http://math.mit.edu/~tabuada}
\thanks{G.~Tabuada was partially supported by a NSF CAREER Award.}
\thanks{M.~Van den Bergh is a senior researcher at the Research Foundation -- Flanders}

\address{Michel Van den Bergh, Department of Mathematics, Universiteit Hasselt, 3590 Diepenbeek, Belgium}
\email{michel.vandenbergh@uhasselt.be}
\urladdr{http://hardy.uhasselt.be/personal/vdbergh/Members/~michelid.html}

\subjclass[2010]{14A20, 14A22, 14F30, 14F40, 18D20, 19D25, 19E08}
\date{\today}

\abstract{In this short article, given a smooth diagonalizable group scheme $G$ of finite type acting on a smooth quasi-compact quasi-separated scheme $X$, we prove that (after inverting some elements of representation ring of $G$) all the information concerning the additive invariants of the quotient stack $[X/G]$ is ``concentrated'' in the subscheme of $G$-fixed points $X^G$. Moreover, in the particular case where $G$ is connected and the action has finite stabilizers, we compute the additive invariants of $[X/G]$ using solely the subgroups of roots of unity of $G$. As an application, we establish a Lefschtez-Riemann-Roch formula for the computation of the additive invariants of proper push-forwards.}}

\maketitle

\section{Introduction and statement of results}\label{sec:intro}
A {\em dg category $\cA$}, over a base field
$k$ (of characteristic $p\geq 0$), is a category enriched over complexes of $k$-vector spaces; see
\S\ref{sub:dg}. Every (dg) $k$-algebra
$A$ gives naturally rise to a dg category with a single
object. Another source of examples is provided by schemes (or, more generally, by algebraic
stacks) since the
category of perfect complexes $\perf(X)$ of every
quasi-compact quasi-separated $k$-scheme $X$ (or algebraic
stack) admits a canonical dg enhancement $\perf_\dg(X)$; see \S\ref{sub:perfect}. Let us denote by $\dgcat(k)$ the category of (essentially small) dg categories.

An {\em additive invariant} is a functor $E\colon \dgcat(k) \to \mathrm{D}$, with values in an additive category, which inverts Morita equivalences and sends semi-orthogonal decompositions in the sense of Bondal-Orlov \cite{BO} to direct sums; see \S\ref{sec:additive}. Examples of additive invariants include algebraic $K$-theory and its variants, cyclic homology and its variants, topological Hochschild homology and its variants, etc. Given a $k$-scheme $X$ (or algebraic stack) as above, we will often write $E(X)$ instead of $E(\perf_\dg(X))$.

Let $G$ be a smooth diagonalizable group $k$-scheme of finite type and $X$ a smooth quasi-compact quasi-separated $k$-scheme $X$ equipped with a $G$-action. In what follows, we will write $[X/G]$ for the associated (global) quotient stack, $G^\vee$ for the group of characters of $G$, and $R(G)\simeq \bbZ[G^\vee]$ for the representation ring of $G$. 

As explained below in \S\ref{sec:action}, given an additive invariant $E\colon \dgcat(k) \to \mathrm{D}$, the Grothendieck ring $K_0([X/G])$, \ie the $G$-equivariant Grothendieck ring of $X$, acts naturally on the object $E([X/G])\in \mathrm{D}$. By pre-composing this action with the ring homomorphism $R(G) \to K_0([X/G])$ (induced by pull-back along the projection map $X\to\bullet:=\mathrm{Spec}(k)$), we hence obtain an action of $R(G)$ on $E([X/G])$. Given a multiplicative set $S \subset R(G)$, consider the following presheaf of abelian groups:
\begin{equation}\label{eq:presheaf}
S^{-1}E([X/G]):= \Hom_{\mathrm{D}}(-,E([X/G]))\otimes_{R(G)} S^{-1}R(G)\,.
\end{equation}
Note that since $S^{-1}R(G)$ can be written as a filtered colimit of free finite $R(G)$-modules, the presheaf \eqref{eq:presheaf} belongs to the category $\mathrm{Ind}(\mathrm{D})$ of ind-objects\footnote{For the general theory of ind-objects, we invite the reader to consult \cite{SGA4,AM}.} in $\mathrm{D}$.

Let $H$ be a closed diagonalizable subgroup of $G$. In what follows, we will write $X^H$ for the smooth subscheme of $H$-fixed points and $S_H$ for the multiplicative set generated by the elements $(1 - \chi) \in R(G)\simeq \bbZ[G^\vee]$, where $\chi \in G^\vee$ is any character of $G$ whose restriction to $H$ is non-trivial.

Under the above notations and assumptions, our first main result is the following:
\begin{theorem}[Motivic concentration]\label{thm:localization}
We have an isomorphism of ind-objects
$$
E(\iota^\ast)\colon S^{-1}_H E([X/G]) \stackrel{\simeq}{\too} S^{-1}_H E([X^H/G])
$$
induced by pull-back along the closed immersion $\iota\colon X^H \hookrightarrow X$. 

Moreover, its inverse is given by the following composition
$$
S^{-1}_H E([X^H/G]) \stackrel{(\lambda_{-1}(\cN)\cdot -)^{-1}}{\too} S^{-1}_HE([X^H/G]) \stackrel{E(\iota_\ast)}{\too} S^{-1}_H E([X/G])\,,
$$
where $\cN$ stands for the conormal bundle of the closed immersion $\iota\colon X^H \hookrightarrow X$, $\lambda_{-1}(\cN)$ for the Grothendieck class $\sum_j (-1)^j [\bigwedge^j(\cN)] \in K_0([X^H/G])$, and $-\cdot -$ for the induced action of the ring $K_0([X^H/G])$ on the ind-object $S^{-1}_H E([X^H/G])$. 
\end{theorem}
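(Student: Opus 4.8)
The plan is to reduce the statement to the classical concentration theorem in equivariant $K$-theory via a devissage/localization argument applied to the Grothendieck ring action, and then bootstrap from $K$-theory to an arbitrary additive invariant using the universal additive invariant.

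First I would recall the structure of the $R(G)$-action on $E([X/G])$ described in \S\ref{sec:action}: the action factors through the $G$-equivariant Grothendieck ring $K_0([X/G]) \cong K_0^G(X)$, and pull-back along $\iota$ intertwines the $R(G)$-module structures. The key classical input is the Thomason/Thomason--Trobaugh concentration theorem: after localizing at $S_H$, the push-forward $\iota_\ast\colon K_0^G(X^H) \to K_0^G(X)$ and pull-back $\iota^\ast$ become mutually inverse isomorphisms of $S_H^{-1}R(G)$-modules, once one twists by $\lambda_{-1}(\cN)$, which becomes invertible in $S_H^{-1}K_0^G(X^H)$ precisely because $\cN$ has no $H$-trivial weights on $X^H$. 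I would invoke this (with the self-intersection formula $\iota^\ast\iota_\ast(-) = \lambda_{-1}(\cN)\cdot(-)$) as a black box.

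Next, to pass from $K$-theory to an arbitrary additive invariant, the strategy is to work with the universal additive invariant $\cU\colon \dgcat(k) \to \NMot$ (the category of noncommutative Chow/additive motives), so that $E$ factors as $E = \overline{E}\circ\cU$ for an additive functor $\overline{E}$. Since $\Hom_{\mathrm{D}}(-,E([X/G])) = \overline{E}\circ\Hom_{\NMot}(-,\cU([X/G]))$ on the relevant subcategory and tensoring with the flat $R(G)$-module $S_H^{-1}R(G)$ commutes with $\overline{E}$ (as a filtered colimit of finite free modules), it suffices to prove the isomorphism after replacing $E$ by $\cU$, i.e. to show that
\begin{equation*}
\cU(\iota^\ast)\colon S_H^{-1}\Hom_{\NMot}(-,\cU([X/G])) \too S_H^{-1}\Hom_{\NMot}(-,\cU([X^H/G]))
\end{equation*}
is an isomorphism of presheaves. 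Evaluating at each object, this becomes a statement about $R(G)$-modules of the form $\Hom_{\NMot}(N, \cU([X/G]))$. Here I would use the fact that in $\NMot$ the Grothendieck group $K_0$ is recovered as $\Hom(\mathbf{1},-)$ and, more importantly, that $\cU$ sends $G$-equivariant semi-orthogonal decompositions and the Thomason-type localization sequences to the corresponding splittings in $\NMot$; combined with compatibility of the $K_0([X/G])$-action with $\cU$, the self-intersection/excess-intersection formula $\iota^\ast\iota_\ast = \lambda_{-1}(\cN)\cdot(-)$ and its companion $\iota_\ast\iota^\ast = (\text{something}) \cdot(-)$ lift verbatim to $\NMot$. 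Then localizing at $S_H$ makes $\lambda_{-1}(\cN)\cdot(-)$ invertible, exactly as in the classical case, giving the claimed inverse as the displayed composition $(\lambda_{-1}(\cN)\cdot-)^{-1}\circ E(\iota_\ast)$.

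The main obstacle I anticipate is establishing the two projection-type formulas — $\iota^\ast\iota_\ast(-)=\lambda_{-1}(\cN)\cdot(-)$ and the dual identity relating $\iota_\ast\iota^\ast$ to multiplication by a class supported on $X^H$ — at the level of the motive $\cU([X/G])$ rather than merely on $K_0$, together with verifying that $\lambda_{-1}(\cN)$ is genuinely a unit in $S_H^{-1}K_0([X^H/G])$ in the non-connected diagonalizable setting (this is where the precise definition of $S_H$, via characters restricting non-trivially to $H$, is used: the weights of $\cN$ on each connected component of $X^H$ are exactly such characters, so $1-\chi$ divides $\lambda_{-1}(\cN)$ up to a unit). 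Once these module-level identities are in hand over $\NMot$, the remaining argument is formal: invert $\lambda_{-1}(\cN)$, check the two composites are the identity, and apply $\overline{E}$.
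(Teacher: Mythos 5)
Your overall skeleton --- reduce to the universal additive invariant $\U$, establish the composition formulas at the level of Fourier--Mukai kernels, and quote Thomason's $K_0$-statements --- is viable, and one half works exactly as you say: $\iota^\ast\iota_\ast=\lambda_{-1}(\cN)\cdot(-)$ holds already in $\Hmo_0(k)$ (Koszul resolution of $\iota_\ast\cO_{X^H}\otimes^{\bf L}_{X}\iota_\ast\cO_{X^H}$), and $\lambda_{-1}(\cN)$ is a unit in $S_H^{-1}K_0([X^H/G])$ by Thomason's Lemma~3.2, so $\iota^\ast$ becomes a split epimorphism after localization. The gap is in the other direction, which you dismiss as ``formal'': showing that $\iota_\ast\circ(\lambda_{-1}(\cN)\cdot -)^{-1}\circ\iota^\ast$ is the identity of $S_H^{-1}\U([X/G])$ is precisely the non-formal half of the theorem. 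Your companion identity $\iota_\ast\iota^\ast=(\text{something})\cdot(-)$ would close it, but only after you (a) identify the ``something'' as $[\iota_\ast\cO_{X^H}]\in K_0([X/G])$ by a kernel-level projection formula, and (b) prove that this class is invertible in $S_H^{-1}K_0([X/G])$. Point (b) does not follow from the self-intersection formula; it requires Thomason's Theorem~2.1 and Lemma~3.3 (the full $K_0$-concentration on $X$, proved by localization sequences and noetherian induction on $X\setminus X^H$), used to transport the invertibility of $\lambda_{-1}(\cN)$ back along the ring isomorphism $S_H^{-1}K_0(\iota^\ast)$. Relatedly, your appeal to ``$\cU$ sending Thomason-type localization sequences to splittings'' is not available: additive invariants only split semi-orthogonal decompositions, and the failure of localization/d\'evissage for general additive invariants (e.g.\ Hochschild homology) is exactly why Thomason's argument cannot be run in $\NMot$; likewise, checking the isomorphism ``at each object $N$'' is hopeless, since $\Hom_{\NMot}(N,\U([X/G]))$ is not a $G$-theory group for general $N$ and Thomason says nothing about it.

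For comparison, the paper sidesteps both issues with a single device: an auxiliary correspondence category $\mathbb{S}\mathrm{ub}^G_0(X)$ of $G$-stable smooth closed subschemes of $X$, whose Hom-groups are the groups $G_0([(Y_1\times_XY_2)/G])$ and which maps to $\Hmo_0(k)$ by Fourier--Mukai kernels. There, once $\iota^\ast$ is known to be a split epimorphism, the Yoneda lemma reduces its invertibility to the single map $\Hom(\UU(X),\iota^\ast)$, which is literally $S_H^{-1}K_0(\iota^\ast)$ --- so everything is checked on $G_0$-groups of quotient stacks, where Thomason applies, and the second composition formula is never needed. Your route can be completed, but you must supply (a) and (b) above explicitly rather than treating the concluding step as formal.
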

Intuitively speaking, Theorem \ref{thm:localization} shows that (after inverting the multiplicative set $S_H$) all the information concerning the additive invariants of the quotient stack $[X/G]$ is ``concentrated'' in the quotient stack $[X^H/G]$. Since Theorem \ref{thm:localization} holds for {\em every} additive invariant, we named it the ``motivic concentration theorem''.
\begin{remark}[Generalization]
Let $\cH$ be a flat quasi-coherent sheaf of algebras over $[X/G]$, \ie a $G$-equivariant flat quasi-coherent sheaf of algebras over $X$. Given an additive invariant $E\colon \dgcat(k) \to \mathrm{D}$, let us write $E([X/G];\cH)$ for the object $E(\perf_\dg([X/G];\cH)) \in \mathrm{D}$, where $\perf_\dg([X/G];\cH)$ stands for the canonical dg enhancement of the category of $G$-equivariant perfect $\cH$-modules $\perf([X/G];\cH)$. As explained in Remark \ref{rk:generalization}, Theorem \ref{thm:localization} holds more generally with $S^{-1}_HE([X/G])$ and $S^{-1}_HE([X^H/G])$ replaced by $S^{-1}_HE([X/G];\cH)$ and $S^{-1}_HE([X^H/G];\iota^\ast(\cH))$.
\end{remark}
\begin{remark}[Localization at prime ideals]\label{rk:prime}
Let $\rho$ be a prime ideal of the representation ring $R(G)\simeq \bbZ[G^\vee]$. Recall that $G\simeq D(G^\vee)$, where $D(-)$ stands for the classical diagonalizable group scheme construction. On the one hand, similarly to \eqref{eq:presheaf}, we can consider the following presheaf of abelian groups:
$$ E([X/G])_{(\rho)}:=\Hom_{\mathrm{D}}(-,E([X/G]))\otimes_{R(G)}R(G)_{(\rho)}\,.$$
On the other hand, following Segal \cite[Prop.~3.7]{Segal}, we can consider the closed diagonalizable subgroup $H_\rho:=D(G^\vee/K_\rho)$ of $G$ (called the {\em support} of $\rho$), where $K_\rho:=\{\chi \in G^\vee\,|\, 1 - \chi \in \rho \subset \bbZ[G^\vee]\}$. Note that $S_{H_\rho} \cap \rho = \emptyset$ and that $H_\rho$ is maximal with respect to this property. Therefore, by further inverting the elements $R(G)\backslash (S_{H_\rho} \cup \rho)$, we conclude that Theorem \ref{thm:localization} holds similarly with $S^{-1}_HE([X/G])$ and $S^{-1}_H E([X^H/G])$ replaced by $E([X/G])_{(\rho)}$ and $E([X^{H_\rho}/G])_{(\rho)}$, respectively.
\end{remark}
Given an additive category $\mathrm{D}$, let us write $- \otimes_\bbZ -$ for the canonical action of the category of finite free $\bbZ$-modules $\mathrm{free}(\bbZ)$ on $\mathrm{D}$. This action extends naturally to an action of $\mathrm{Ind}(\mathrm{free}(\bbZ))$ on $\mathrm{Ind}(\mathrm{D})$. Our second main result is the following:
\begin{theorem}\label{thm:roots}
Assume that the base field $k$ (of characteristic $p\geq 0$) contains the $l^{\mathrm{th}}$ roots of unity for every~prime~$l \neq p$ such that $(G^\vee)_{l\text{-}\text{torsion}}\neq 0$. Under this assumption, we have an isomorphism of ind-objects:
$$
S^{-1}_{G} E([X/G]) \simeq E(X^G) \otimes_\bbZ S^{-1}_G R(G)\,.
$$
\end{theorem}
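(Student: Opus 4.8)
The plan is to deduce Theorem~\ref{thm:roots} from Theorem~\ref{thm:localization} by computing the additive invariant of the quotient stack $[X^G/G]$, on which $G$ acts trivially. First I would apply Theorem~\ref{thm:localization} with $H:=G$. In this case $X^H=X^G$ and the multiplicative set $S_H$ coincides with $S_G$, so pull-back along $\iota\colon X^G\hookrightarrow X$ yields an $R(G)$-linear isomorphism of ind-objects
$$
E(\iota^\ast)\colon S^{-1}_GE([X/G])\stackrel{\simeq}{\too}S^{-1}_GE([X^G/G])\,.
$$
Note that no hypothesis on $k$ has been used so far; it therefore remains to produce an $R(G)$-linear isomorphism $S^{-1}_GE([X^G/G])\simeq E(X^G)\otimes_\bbZ S^{-1}_GR(G)$.

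Next, since $G$ acts trivially on the smooth scheme $X^G$, we have $[X^G/G]\simeq X^G\times_k BG$, and $\perf_\dg([X^G/G])$ decomposes, up to Morita equivalence, as the orthogonal sum of its isotypic components
$$
\perf_\dg([X^G/G])\;\simeq\;\textstyle\bigoplus_{\chi\in G^\vee}\perf_\dg(X^G)\,,
$$
the $\chi$-th summand being the essential image of the fully faithful functor $p^\ast(-)\otimes\cL_\chi$, where $p\colon[X^G/G]\to X^G$ is the projection and $\cL_\chi$ is the line bundle attached to the character $\chi$ (the orthogonality coming from the linear reductivity of $G$). Assembling the induced morphisms $E(p^\ast(-)\otimes\cL_\chi)\colon E(X^G)\to E([X^G/G])$ over $\chi\in G^\vee$, and using that $\Hom_{\mathrm D}(-,E([X^G/G]))$ is a presheaf of $R(G)$-modules, one obtains a natural $R(G)$-linear comparison morphism of ind-objects
$$
\Phi\colon E(X^G)\otimes_\bbZ R(G)\;\simeq\;\textstyle\bigoplus_{\chi\in G^\vee}E(X^G)\;\too\;E([X^G/G])\,,
$$
with $R(G)=\bbZ[G^\vee]$ acting on the source through its tautological action on the factor $R(G)$. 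The content of the theorem is that $\Phi$ becomes an isomorphism after $-\otimes_{R(G)}S^{-1}_GR(G)$; granting this, the previous step and the identity $R(G)\otimes_{R(G)}S^{-1}_GR(G)=S^{-1}_GR(G)$ give the claim.

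Finally, to prove that $\Phi$ is an isomorphism after localizing, observe first that when $G$ is finite the group $G^\vee$ is finite, the decomposition above is a \emph{finite} orthogonal sum --- in particular a semi-orthogonal decomposition --- and the defining property of additive invariants shows that $\Phi$ is already an isomorphism before localizing; no hypothesis on $k$ is used in this case. In general I would argue by dévissage along the product factorization $G\simeq\bbG_m^{\,r}\times D((G^\vee)_{\mathrm{tors}})$ corresponding to $G^\vee\simeq\bbZ^{r}\oplus(G^\vee)_{\mathrm{tors}}$: the hypothesis that $k$ contains the $\ell$-th roots of unity for every prime $\ell$ with $(G^\vee)_{\ell\text{-torsion}}\neq 0$ is exactly what is needed to split the finite factor, up to a filtration, into \emph{constant} cyclic group schemes $\bbZ/\ell$, so that --- via a Künneth-type decomposition of $\perf_\dg([X^G/G])$ along the product factorization of $G$ and an induction on the number of factors --- the compatibility of $E$ with the character decomposition is reduced to the finite case just treated together with the single base case $G=\bbG_m$. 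For $G=\bbG_m$, writing $Y:=X^{\bbG_m}$, one exhausts $\perf_\dg(Y\times_k B\bbG_m)$ by the thick subcategories of objects supported in weights $[-n,n]$, each a finite semi-orthogonal decomposition into $2n+1$ copies of $\perf_\dg(Y)$, so that $E$ of it is the colimit of the split monomorphisms $E(Y)\hookrightarrow E(Y)^{\oplus 3}\hookrightarrow\cdots$ into $E(Y\times_k B\bbG_m)$; the remaining task is to show that this colimit exhausts $E(Y\times_k B\bbG_m)$ \emph{after} inverting the multiplicative set $S_{\bbG_m}$ generated by the $1-t^{n}$, $n\neq 0$. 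Unwinding $\bbZ[G^\vee]$ and $S^{-1}_GR(G)$ then yields the desired isomorphism.

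I expect this last point to be the main obstacle: an arbitrary additive invariant need not commute with filtered colimits of dg categories, so the compatibility of $E$ with the \emph{infinite} character decomposition of $\perf_\dg([X^G/G])$ cannot be taken for granted and has to be forced by the localization at $S_G$; and it is in organizing the dévissage for the torsion part of $G$ that the hypothesis on the roots of unity in $k$ intervenes. The remaining ingredients --- the reduction via Theorem~\ref{thm:localization}, the Künneth step, and the finite-group case --- are formal.
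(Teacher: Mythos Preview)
Your reduction via Theorem~\ref{thm:localization} with $H=G$ is exactly what the paper does, and your construction of the comparison map $\Phi$ is morally the same. The divergence is in how you handle the infinite character decomposition, and there your proposal has a genuine gap.

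You correctly identify the obstacle: an arbitrary additive invariant $E$ need not commute with the filtered colimit exhausting $\perf_\dg([X^G/\bbG_m])$ by bounded-weight subcategories. But you do not explain why inverting $S_{\bbG_m}$ would force this colimit to exhaust $E([X^G/\bbG_m])$; there is no mechanism in sight, and for a general $E$ I do not see one. The paper sidesteps this problem entirely by a different device: it works first with the \emph{universal} additive invariant $\U$ in $\Hmo_0(k)$, proves the isomorphism there \emph{before} any localization, and only afterwards passes to a general $E$ via the factorization $E\simeq\overline{E}\circ\U$ and its ind-extension. The point is that in $\Hmo_0(k)$ the infinite coproduct $\bigoplus_{\chi\in G^\vee}\U(k)$ actually exists and equals $\U([\bullet/G])$: under the hypothesis on roots of unity one has a Morita equivalence $\perf_\dg([\bullet/G])\simeq\coprod_{\chi\in G^\vee}k$, and then
\[
\Hom_{\Hmo_0(k)}\bigl(\U(\textstyle\coprod_\chi k),\cB\bigr)=K_0\,\rep\bigl(\textstyle\coprod_\chi k,\cB\bigr)\simeq\prod_\chi K_0\,\rep(k,\cB)
\]
since $K_0$ preserves arbitrary products. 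Combined with the K\"unneth Morita equivalence $\perf_\dg(X^G)\otimes\perf_\dg([\bullet/G])\to\perf_\dg([X^G/G])$ (Proposition~\ref{prop:trivial}) and the symmetric monoidality of $\U$, this gives $\U(X^G)\otimes_\bbZ R(G)\simeq\U([X^G/G])$ outright. The passage to $E$ is then formal because $\mathrm{Ind}(\overline{E})$ respects the $\mathrm{Ind}(\mathrm{free}(\bbZ))$-action $-\otimes_\bbZ-$.

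A secondary remark: your d\'evissage along $G\simeq\bbG_m^r\times D((G^\vee)_{\mathrm{tors}})$ and your account of where the roots-of-unity hypothesis enters are not how the paper argues. The paper uses the hypothesis to ensure that every simple $G$-representation has endomorphism ring $k$ (so that $\perf_\dg([\bullet/G])$ is Morita equivalent to a disjoint union of copies of $k$), not to run an induction on factors. Your finite-$G$ step and the K\"unneth reduction are unnecessary once one adopts the universal-invariant viewpoint.
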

Note that when $G$ is moreover connected, \ie a $k$-split torus $T$, the assumption of Theorem \ref{thm:roots} is vacuous. In this case, we have an isomorphism of ind-objects 
$$S^{-1}_{T} E([X/T]) \simeq E(X^T) \otimes_\bbZ \bbZ[t_1^\pm, \ldots, t_r^{\pm}][\{(1-t^j_i)^{-1}\}_{i, j}])\,,$$
where $r$ stands for the rank of $T$, $1\leq i \leq r$, and $j\neq 0 \in \bbZ$.

Similarly to Theorem \ref{thm:localization}, Theorem \ref{thm:roots} shows that (after inverting the multiplicative set $S_G$) all the information concerning the additive invariants of the quotient stack $[X/G]$ is ``concentrated'' in the subscheme of $G$-fixed points $X^G$.

\medskip

We now illustrate Theorems \ref{thm:localization} and \ref{thm:roots} in several examples:
\begin{example}[Algebraic $K$-theory]\label{ex:K-theory1}
Algebraic $K$-theory gives rise to an additive invariant $K\colon \dgcat(k) \to \Ho(\Spt)$ with values in the category of spectra; see \cite[\S2.2.1]{book}. Hence, Theorem \ref{thm:localization} applied to $E=K$ yields an isomorphism of ind-objects:
\begin{equation}\label{eq:iso-spectra}
K(\iota^\ast)\colon S^{-1}_HK([X/G]) \stackrel{\simeq}{\too} S^{-1}_H K([X^H/G])\,.
\end{equation}
Consequently, we obtain, in particular, isomorphisms of abelian groups:
\begin{equation}\label{eq:inverse1}
K_\ast(\iota^\ast)\colon S^{-1}_H K_\ast([X/G])  \stackrel{\simeq}{\too} S^{-1}_H K_\ast([X^H/G])\,.
\end{equation}
Several variants of algebraic $K$-theory such as Karoubi-Villamayor $K$-theory, homotopy $K$-theory, and \'etale $K$-theory, are also additive invariants; see \cite[\S2.2.2-\S2.2.6]{book}. Hence, isomorphisms similar to \eqref{eq:iso-spectra}-\eqref{eq:inverse1} also hold for all these variants.

The above isomorphisms \eqref{eq:inverse1} and their explicit inverses, with $S^{-1}_H K_\ast([X/G])$ and $S^{-1}_H K_\ast([X^H/G])$ replaced by $K_\ast([X/G])_{(\rho)}$ and $K_\ast([X^{H_\rho}/G])_{(\rho)}$ (see Remark \ref{rk:prime}), were originally established by Thomason in \cite[Thm.~2.1 and Prop.~3.1]{Thomason} under the weaker assumption that $X$ is a regular algebraic space. Previously, in the particular case of the Grothendieck group, the  isomorphism \eqref{eq:inverse1} and its explicit inverse were established by Nielsen in \cite[Thm.~3.2]{Nielsen} under the stronger assumptions that $X$ is a smooth projective $k$-scheme and that $k$~is~algebraically~closed.  
\end{example}
\begin{example}[Mixed complex]\label{ex:mixed1}
Recall from Kassel \cite[\S1]{Kassel} that a {\em mixed complex} is a (right) dg module over the algebra of
  dual numbers $\Lambda:=k[\epsilon]/\epsilon^2$, where
  $\mathrm{deg}(\epsilon)=-1$ and $d(\epsilon)=0$. The mixed complex construction gives rise to an additive invariant
  $\mathrm{C}\colon \dgcat(k) \to \cD(\Lambda)$ with values in the derived category of $\Lambda$; see \cite[\S2.2.7]{book}. Hence, Theorem \ref{thm:localization} applied to $E=\mathrm{C}$ yields an isomorphism of ind-objects:
\begin{equation}\label{eq:iso-mixed}
\mathrm{C}(\iota^\ast)\colon S^{-1}_H\mathrm{C}([X/G]) \stackrel{\simeq}{\too} S^{-1}_H \mathrm{C}([X^H/G])\,.
\end{equation}
Cyclic homology and all its variants such as Hochschild homology, negative cyclic homology, and periodic cyclic homology, factor through $\mathrm{C}$. Consequently, an isomorphism similar to \eqref{eq:iso-mixed} also holds for all these invariants. To the best of the authors' knowledge, all these isomorphisms are new in the literature.
\end{example}
\begin{example}[Periodic cyclic homology]\label{ex:HP}
Assume that $\mathrm{char}(k)=0$. Periodic cyclic homology gives rise to an additive invariant $HP_\pm\colon \dgcat(k) \to \mathrm{Vect}_{\bbZ/2}(k)$ with values in the category of $\bbZ/2$-graded $k$-vector spaces; see \cite[\S2.2.11]{book}. Moreover, thanks to the Hochschild-Kostant-Rosenberg theorem, we have an isomorphism $HP_\pm(Y)\simeq (\bigoplus_{i \,\mathrm{even}}H^i_{\mathrm{dR}}(Y), \bigoplus_{i \, \mathrm{odd}}H^i_{\mathrm{dR}}(Y))$ for every smooth $k$-scheme $Y$, where $H^\ast_{\mathrm{dR}}(-)$ stands for de Rham cohomology. Therefore, Theorem \ref{thm:roots} applied to $E=HP_\pm$ yields, in particular, an isomorphism of $\bbZ/2$-graded $k$-vector spaces:
$$
S^{-1}_G HP_\pm([X/G])\simeq (\bigoplus_{i \, \mathrm{even}}H^i_{\mathrm{dR}}(X^G), \bigoplus_{i \, \mathrm{odd}}H^i_{\mathrm{dR}}(X^G))\otimes_\bbZ S^{-1}_G R(G)\,.
$$
This description of the periodic cyclic homology of the quotient stack $[X/G]$ in terms of the de Rham cohomology of the subscheme of $G$-fixed points $X^G$ is, to the best of the authors' knowledge, new in the literature.
\end{example}
\begin{example}[Topological Hochschild homology]\label{ex:THH}
Topological Hochschild homology gives rise to an additive invariant $THH\colon \dgcat(k) \to \Ho(\Spt)$; see \cite[\S2.2.12]{book}. Hence, Theorem \ref{thm:localization} applied to $E=THH$ yields an isomorphism of ind-objects:
\begin{equation}\label{eq:iso-THH}
THH(\iota^\ast)\colon S^{-1}_HTHH([X/G]) \stackrel{\simeq}{\too} S^{-1}_H THH([X^H/G])\,.
\end{equation}
Topological Hochschild homology and all its variants such as topological cyclic homology, topological negative cyclic homology, and topological periodic cyclic homology, are also additive invariants; consult \cite{Lars,NS}\cite[\S2.2.13]{book}. Consequently, an isomorphism similar to \eqref{eq:iso-THH} also holds for all these variants. To the best of the authors' knowledge, all these isomorphisms are new in the literature.
\end{example}
\begin{example}[Topological periodic cyclic homology]\label{ex:TP}
Assume that $k$ is a perfect field of characteristic $p>0$. Let $W(k)$ be the ring of $p$-typical Witt vectors of $k$ and $K:= W(k)_{1/p}$ the fraction field of $W(k)$. Periodic cyclic homology gives rise to an additive invariant $TP_\pm(-)_{1/p}\colon \dgcat(k) \to \mathrm{Vect}_{\bbZ/2}(K)$ with values in the category of $\bbZ/2$-graded $K$-vector spaces; see \cite[Thm.~2.3]{finite}. Moreover, following Scholze (see \cite[Thm.~5.2]{CD}), we have $TP_\pm(Y)_{1/p} \simeq (\bigoplus_{i \, \mathrm{even}}H^i_{\mathrm{crys}}(Y), \bigoplus_{i \, \mathrm{odd}}H^i_{\mathrm{crys}}(Y))$ for every smooth proper $k$-scheme $Y$, where $H^\ast_{\mathrm{crys}}(-)$ stands for crystalline cohomology. Therefore, Theorem \ref{thm:roots} applied to $E=TP_\pm(-)_{1/p}$ yields, in particular, an isomorphism of $\bbZ/2$-graded $K$-vector spaces:
$$
S^{-1}_G TP_\pm([X/G])_{1/p}\simeq (\bigoplus_{i \, \mathrm{even}}H^i_{\mathrm{crys}}(X^G), \bigoplus_{i \, \mathrm{odd}}H^i_{\mathrm{crys}}(X^G))\otimes_\bbZ S^{-1}_G R(G)\,.
$$
Similarly to the above Example \ref{ex:HP}, this description of the topological periodic cyclic homology of the quotient stack $[X/G]$ in terms of the crystalline cohomology of the subscheme of $G$-fixed points $X^G$ is new in the literature.
\end{example}
\subsection*{Proper push-forwards}
The following result is an immediate application of the above Theorems \ref{thm:localization} and \ref{thm:roots}:
\begin{theorem}[Motivic Lefschetz-Riemann-Roch formula]\label{thm:push-forward}
Given a $G$-equivariant proper map $f\colon X \to Y$, between smooth quasi-compact quasi-separated $k$-schemes, we have the following commutative diagram of ind-objects:
\begin{equation}\label{eq:Euler}
\xymatrix{
S^{-1}_H E([X/G]) \ar[dd]_-{E(f_\ast)} \ar[rr]^{E(\iota^\ast)} && S^{-1}_H E([X^H/G]) \ar[d]^-{(\lambda_{-1}(\cN)\cdot -)^{-1}} \\
&& S^{-1}_H E([X^H/G]) \ar[d]^-{E((f\circ \iota)_\ast)} \\
S^{-1}_H E([Y/G]) \ar@{=}[rr] && S^{-1}_H E([Y/G])\,.
}
\end{equation}
Moreover, in the particular case where $X^G$ consists of a finite set of closed points and $Y=\bullet$, the commutative diagram \eqref{eq:Euler} (with $H=G$) reduces to the following commutative diagram of ind-objects\footnote{In the particular case where $X^H$ consists of a finite set of closed points and $G$ is moreover connected, the $G$-action on $X^H$ is necessarily trivial. Consequently, in this case, the above diagram \eqref{eq:Euler1} holds similarly with $S^{-1}_G E(-)$ and $X^G$ replaced by $S^{-1}_H E(-)$ and $X^H$, respectively.}
\begin{equation}\label{eq:Euler1}
\xymatrix{
S^{-1}_G E([X/G]) \ar[dd]_-{E(f_\ast)} \ar[rr]^{E(\iota^\ast)} && \bigoplus_{x \in X^G} S^{-1}_G E([\bullet/G]) \ar[d]^-{\bigoplus_{x \in X^G}(\lambda_{-1}(T^\vee_x)\cdot -)^{-1}} \\
&& \bigoplus_{x \in X^G} S^{-1}_G E([\bullet/G]) \ar[d]^-{\nabla} \\
S^{-1}_G E([\bullet/G]) \ar@{=}[rr] && S^{-1}_G E([\bullet/G])\,,
}
\end{equation}
where $\nabla$ stands for the co-diagonal map and $T^\vee_x$ for the dual of the tangent bundle of $X$ at the point $x$. Furthermore, whenever $k$ contains the $l^{\mathrm{th}}$ roots of unity for every prime $l\neq p$ such that $(G^\vee)_{l\text{-}\text{torsion}}\neq 0$, the ind-object $\bigoplus_{x \in X^G} S^{-1}_G E([\bullet/G])$ in \eqref{eq:Euler1} can be replaced by the ind-object $E(k) \otimes_\bbZ \bigoplus_{x \in X^G} S^{-1}_G R(G)$.
\end{theorem}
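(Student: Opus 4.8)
The plan is to deduce Theorem~\ref{thm:push-forward} formally from Theorem~\ref{thm:localization} (and, for its last assertion, from Theorem~\ref{thm:roots}), using only two standard properties of proper push-forwards of additive invariants, both already implicit in the formulation of Theorem~\ref{thm:localization}. First, a $G$-equivariant proper map $g$ between smooth quasi-compact quasi-separated $k$-schemes is a perfect (lci) morphism---indeed, any morphism of smooth $k$-schemes factors as a regular immersion followed by a smooth morphism---so that $Rg_\ast$ preserves perfect complexes and induces a morphism $E(g_\ast)$ in $\mathrm{D}$; moreover $E(g_\ast)$ is $R(G)$-linear, by the projection formula, hence descends to the $S_H$-localizations. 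Second, this assignment is functorial: $E((g\circ h)_\ast)=E(g_\ast)\circ E(h_\ast)$.

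\smallskip

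To prove the commutativity of \eqref{eq:Euler} I would argue as follows. Since $\iota\colon X^H\hookrightarrow X$ is a $G$-equivariant closed immersion and $f$ is $G$-equivariant and proper, the composite $f\circ\iota\colon X^H\to Y$ is $G$-equivariant, proper and has smooth source, so $E((f\circ\iota)_\ast)$ is defined and equals $E(f_\ast)\circ E(\iota_\ast)$. By Theorem~\ref{thm:localization}, after inverting $S_H$ one has the identity of ind-object morphisms
\[
E(\iota_\ast)\circ(\lambda_{-1}(\cN)\cdot-)^{-1}\circ E(\iota^\ast)=\mathrm{id}_{S^{-1}_H E([X/G])}\,.
\]
Composing on the left with $E(f_\ast)$ and invoking the functoriality above then gives $E((f\circ\iota)_\ast)\circ(\lambda_{-1}(\cN)\cdot-)^{-1}\circ E(\iota^\ast)=E(f_\ast)$, which is precisely the commutativity of \eqref{eq:Euler}.

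\smallskip

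To obtain \eqref{eq:Euler1} I would specialize to $H=G$, $Y=\bullet$, with $X^G$ a finite set of closed points. As $G$ is diagonalizable and acts trivially on its fixed locus, $[X^G/G]=\coprod_{x\in X^G}[x/G]=\coprod_{x\in X^G}[\bullet/G]$; hence, by additivity of $E$ together with the compatibility of $S^{-1}_G(-)$ with finite direct sums, $S^{-1}_G E([X^G/G])\simeq\bigoplus_{x\in X^G}S^{-1}_G E([\bullet/G])$. At a smooth point $x$ the conormal bundle of $\iota$ restricts on the $x$-component to the cotangent representation $T^\vee_x=\mathfrak{m}_x/\mathfrak{m}_x^2\in K_0([\bullet/G])=R(G)$, so $(\lambda_{-1}(\cN)\cdot-)^{-1}$ becomes $\bigoplus_{x}(\lambda_{-1}(T^\vee_x)\cdot-)^{-1}$; and $f\circ\iota$ restricts on each component to the identity of $[\bullet/G]$, so $E((f\circ\iota)_\ast)$ becomes the co-diagonal $\nabla$. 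Feeding these identifications into \eqref{eq:Euler} yields \eqref{eq:Euler1}; the footnote variant is identical, using that a connected group acting on a finite discrete scheme acts trivially, so again $[X^H/G]=\coprod_{x\in X^H}[\bullet/G]$. For the final claim, Theorem~\ref{thm:roots} applied to the $G$-scheme $\bullet=\mathrm{Spec}(k)$---whose hypothesis on roots of unity is exactly the stated one---gives $S^{-1}_G E([\bullet/G])\simeq E(k)\otimes_\bbZ S^{-1}_G R(G)$, whence $\bigoplus_{x\in X^G}S^{-1}_G E([\bullet/G])\simeq E(k)\otimes_\bbZ\bigoplus_{x\in X^G}S^{-1}_G R(G)$.

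\smallskip

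Since the whole statement reduces to Theorems~\ref{thm:localization} and \ref{thm:roots}, I do not expect a genuine obstacle here; the points demanding care are purely bookkeeping ones---the $R(G)$-linearity (via the projection formula) needed to descend to the $S_H$-localizations, the compatibility $E((g\circ h)_\ast)=E(g_\ast)\circ E(h_\ast)$ of proper push-forward with composition, and the identification of the conormal bundle of $\iota$ at a fixed point with the cotangent representation $T^\vee_x$.
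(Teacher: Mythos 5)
Your proposal is correct and follows exactly the route the paper intends: the paper offers no separate proof of this theorem, presenting it as an ``immediate application'' of Theorems~\ref{thm:localization} and \ref{thm:roots}, and your argument---composing $E(f_\ast)$ with the explicit inverse from Theorem~\ref{thm:localization}, using $R(G)$-linearity via the projection formula and functoriality of proper push-forwards, then identifying $[X^G/G]$ with $\coprod_x[\bullet/G]$ and $\cN|_x$ with $T^\vee_x$---is precisely the verification being left to the reader. The only point you share with the paper's own gloss is the tacit assumption that the closed points of $X^G$ are $k$-rational, which the statement itself already presupposes.
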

Intuitively speaking, the commutative diagram \eqref{eq:Euler}, resp. \eqref{eq:Euler1}, shows that after inverting the multiplicative set $S_H$, resp. $S_G$, all the information concerning the additive invariants of the push-forward along $f$, resp. along $X \to \bullet$, is ``concentrated'' in the quotient stack $[X^H/G]$, resp. in the finite set~of~closed~points~$X^G$.

To the best of the authors' knowledge, Theorem \ref{thm:push-forward} is new in the literature. In the particular case where $E=K_0(-)$, the diagram \eqref{eq:Euler1} was originally established by Nielsen in \cite[Prop.~4.5]{Nielsen} (under the stronger assumptions that $X$ is a smooth projective $k$-scheme and that $k$ is algebraically closed) and later by Thomason in \cite[Thm.~3.5]{Thomason} (with $S^{-1}_G K_0(-)$ replaced by $K_0(-)_{(\{0\})}$ (see Remark \ref{rk:prime}) under the weaker assumption that $X$ is a regular algebraic space). Note that in this particular case, the diagram \eqref{eq:Euler1} reduces to the classical Lefschetz-Riemann-Roch formula
\begin{equation}\label{eq:LRR}
\sum_i (-1)^i [H^i(X;\cF)] = \sum_{x \in X^G} \frac{[\cF_x]}{\sum_j (-1)^j [\bigwedge^j (T^\vee_x)]}\quad \mathrm{in}\,\, S^{-1}_GR(G)\,,
\end{equation}
which computes the $G$-equivariant Euler characteristic of every $G$-equivariant perfect complex of $\cO_X$-modules $\cF$ in terms of the finite set of closed points $X^G$. It is well-known that the formula \eqref{eq:LRR} implies many other classical formulas such as the Woods Hole fixed-point formula (see \cite{Woods}), the Weyl's character formula (see \cite{Demazure,Weyl}), the Brion's counting formula (see \cite{Brion1}), etc.
\subsection*{Torus actions with finite stabilizers}
In this subsection we assume that $G$ is moreover connected, i.e. a $k$-split torus $T$, and that the $T$-action on $X$ has finite (geometric) stabilizers. Let us denote by $\cC(T)$ the set of all those subgroups $\mu_n \subset T$ such that $X^{\mu_n}\neq \emptyset$. Note that since the $T$-action on $X$ has finite stabilizers, the set $\{n \in \bbN\,|\, \mu_n \in \cC(T)\}$ is finite; in what follows, we will write $r$ for the least common multiple of the elements of this latter set.

Given a subgroup $\mu_n \in \cC(T)$, let $\underline{S}_{\mu_n} \subset R(T)_{1/r}$ be the multiplicative set defined as the pre-image of $1$ under the following $\bbZ[1/r]$-algebra homomorphism 
$$R(T)_{1/r} \stackrel{\text{(a)}}{\too} R(\mu_n)_{1/r} \stackrel{\text{(b)}}{\simeq} \frac{\bbZ[1/r][t]}{\langle t^n -1 \rangle} \simeq \prod_{d|n} \frac{\bbZ[1/r][t]}{\langle \Phi_d(t) \rangle} \stackrel{\text{(c)}}{\too} \frac{\bbZ[1/r][t]}{\langle\Phi_n(t)\rangle}\,,$$
where (a) is the restriction homomorphism, (b) is induced by the choice of a(ny) generator $t$ of the character group $\mu_n^\vee$, $\Phi_d(t)$ stands for the $d^{\mathrm{th}}$ cyclotomic polynomial, and (c) is the projection homomorphism. Under these notations and assumptions, our third main result is the following:
\begin{theorem}\label{thm:stabilizers}
For every additive invariant $E\colon \dgcat(k) \to \mathrm{D}$, with values in a $\bbZ[1/r]$-linear category, we have an isomorphism of ind-objects
\begin{equation}\label{eq:stabilizers}
E([X/T]) \stackrel{\simeq}{\too} \bigoplus_{\mu_n \in \cC(T)} \underline{S}^{-1}_{\mu_n} E([X^{\mu_n}/T])
\end{equation}
induced by pull-back along the closed immersions $X^{\mu_n} \hookrightarrow X$. Moreover, the direct sum on the right-hand side is finite.
\end{theorem}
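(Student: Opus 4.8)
The plan is to reduce the isomorphism \eqref{eq:stabilizers} to an isomorphism of $R(T)_{1/r}$-modules and then verify it prime by prime, feeding in the motivic concentration theorem in the prime-localized form of Remark~\ref{rk:prime}. First, since $\mathrm{D}$ is $\bbZ[1/r]$-linear, the $R(T)$-actions on $E([X/T])$ and on the $E([X^{\mu_n}/T])$ extend to $R(T)_{1/r}$-actions, and the pull-backs $E(\iota_n^\ast)$ along $\iota_n\colon X^{\mu_n}\hookrightarrow X$ become $R(T)_{1/r}$-linear. Under the fully faithful embedding $\mathrm{Ind}(\mathrm{D})\hookrightarrow\Fun(\mathrm{D}^{\op},\mathrm{Ab})$ the ind-object $\underline{S}_{\mu_n}^{-1}E([X^{\mu_n}/T])$ corresponds to the presheaf $d\mapsto\underline{S}_{\mu_n}^{-1}\Hom_{\mathrm D}(d,E([X^{\mu_n}/T]))_{1/r}$ (compare \eqref{eq:presheaf}), and a finite direct sum of ind-objects to the direct sum of presheaves; hence \eqref{eq:stabilizers} is an isomorphism once we know that, for every $d\in\mathrm{D}$, the $R(T)_{1/r}$-linear map
\begin{equation*}
\phi_d\colon M:=\Hom_{\mathrm D}(d,E([X/T]))_{1/r}\;\too\;\bigoplus_{\mu_n\in\cC(T)}\underline{S}_{\mu_n}^{-1}N_n\, ,\qquad N_n:=\Hom_{\mathrm D}(d,E([X^{\mu_n}/T]))_{1/r}\, ,
\end{equation*}
assembled from the $E(\iota_n^\ast)$, is an isomorphism. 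As kernels, cokernels and vanishing of modules are detected after localizing at all primes, it suffices to check that $\phi_d$ becomes an isomorphism after localizing at each prime $\rho$ of $R(T)_{1/r}$. Finiteness of the direct sum is immediate: $\{n\mid\mu_n\in\cC(T)\}$ is finite by the finite stabilizers hypothesis, and $T=D(T^\vee)$ has only finitely many subgroups isomorphic to a given $\mu_n$.

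The heart of the matter is the stratification of $\Spec R(T)_{1/r}$ by the ``support'' $H_\rho=D(T^\vee/K_\rho)$, $K_\rho=\{\chi\in T^\vee\mid\chi-1\in\rho\}$. For $\mu_m\in\cC(T)$ one has $m\mid r$; I would set $Z_m:=V(I_m)$, where $I_m$ is the kernel of the surjection $R(T)_{1/r}\to R(\mu_m)_{1/r}\to\bbZ[1/r][t]/\langle\Phi_m(t)\rangle$ from the definition of $\underline{S}_{\mu_m}$, so that $R(T)_{1/r}/I_m\cong\bbZ[1/r][\zeta_m]$, the subset $Z_m$ is closed, and $\underline{S}_{\mu_m}=1+I_m$. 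Since $m\mid r$, every residue characteristic of $\bbZ[1/r][\zeta_m]$ is prime to $m$, so $\zeta_m$ has order exactly $m$ in each residue field; a short computation then shows $\rho\in Z_m$ if and only if $H_\rho=\mu_m$, whence the $Z_m$ ($\mu_m\in\cC(T)$) are pairwise disjoint closed subsets. I would also record that if $X^{H_\rho}\neq\emptyset$ then $H_\rho$ is finite (finite stabilizers), and --- since $\chi\mapsto\bar\chi$ embeds $T^\vee/K_\rho$ into the unit group of the domain $R(T)_{1/r}/\rho$, hence of a field --- $H_\rho$ must then be cyclic, say $H_\rho=\mu_n$; as $X^{\mu_n}=X^{H_\rho}\neq\emptyset$ this forces $\mu_n\in\cC(T)$ and $\rho\in Z_n$.

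Now fix a prime $\rho$. Applying Remark~\ref{rk:prime} to the smooth $T$-scheme $X^{\mu_m}$ gives $N_{m,\mathfrak{q}}\cong\Hom_{\mathrm D}(d,E([(X^{\mu_m})^{H_{\mathfrak{q}}}/T]))_{(\mathfrak{q})}$ for every prime $\mathfrak{q}$, which vanishes unless $X^{H_{\mathfrak{q}}}\neq\emptyset$ (as $(X^{\mu_m})^{H_{\mathfrak{q}}}\subseteq X^{H_{\mathfrak{q}}}$), hence unless $H_{\mathfrak{q}}=\mu_n\in\cC(T)$, i.e. $\mathfrak{q}\in Z_n$. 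So if $(\underline{S}_{\mu_m}^{-1}N_m)_{(\rho)}\neq 0$, pick $\mathfrak{q}\subseteq\rho$ with $\mathfrak{q}\cap\underline{S}_{\mu_m}=\emptyset$ and $N_{m,\mathfrak{q}}\neq 0$; from $\underline{S}_{\mu_m}=1+I_m$ one gets $\mathfrak{q}+I_m\neq R(T)_{1/r}$, so $\mathfrak{q}$ has a specialization $\mathfrak{q}'\in Z_m$, whereas $\mathfrak{q}\in Z_n$ and $Z_n$ closed force $\mathfrak{q}'\in Z_n$; disjointness gives $n=m$, hence $\rho\in Z_m$. Therefore $(\underline{S}_{\mu_m}^{-1}N_m)_{(\rho)}=0$ unless $H_\rho=\mu_m$, while for $H_\rho=\mu_m$ one simply has $(\underline{S}_{\mu_m}^{-1}N_m)_{(\rho)}=N_{m,(\rho)}$ (since then $\underline{S}_{\mu_m}\subseteq R(T)_{1/r}\setminus\rho$). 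Since at most one $\mu_m\in\cC(T)$ satisfies $H_\rho=\mu_m$, two cases remain: if none does, then $X^{H_\rho}=\emptyset$, so $M_{(\rho)}\cong\Hom_{\mathrm D}(d,E(\emptyset))_{(\rho)}=0$ and the target of $\phi_d$ also localizes to $0$; if $H_\rho=\mu_n\in\cC(T)$, then $X^{H_\rho}=X^{\mu_n}$, the target localizes to $N_{n,(\rho)}$, and $\phi_d$ localizes to $\Hom_{\mathrm D}(d,-)$ applied to the concentration isomorphism $E(\iota_n^\ast)_{(\rho)}\colon E([X/T])_{(\rho)}\isoto E([X^{H_\rho}/T])_{(\rho)}$ of Remark~\ref{rk:prime}. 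In either case the localized map is an isomorphism, and the theorem follows.

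The step I expect to be the main obstacle is the middle one: pinning down, for each $\mu_m\in\cC(T)$, the exact set of primes $\rho$ with $H_\rho=\mu_m$, and using $m\mid r$ --- i.e. the passage to $\bbZ[1/r]$ --- to force the loci $Z_m$ to be mutually disjoint and closed. This is exactly what ensures that at any prime of $R(T)_{1/r}$ at most one summand on the right-hand side of \eqref{eq:stabilizers} is ``seen''; everything else is bookkeeping on top of the motivic concentration theorem.
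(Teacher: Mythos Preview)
Your argument is correct, and it follows a genuinely different route from the paper's. The paper never descends to primes of $R(T)_{1/r}$: instead it works in the auxiliary category $\mathbb{S}\mathrm{ub}^G_0(X)_{1/r}$ and applies Yoneda there, so that the question of whether $\UU(X)_{1/r}\to\bigoplus_{\mu_n}\underline{S}^{-1}_{\mu_n}\UU(X)_{1/r}$ is an isomorphism reduces verbatim to the $K_0$-isomorphism $K_0([X/T])_{1/r}\stackrel{\sim}{\to}\bigoplus_{\mu_n}\underline{S}^{-1}_{\mu_n}K_0([X/T])_{1/r}$, which is quoted from Vezzosi--Vistoli as a black box; only afterwards is Theorem~\ref{thm:localization} invoked (in its $S_{\mu_n}$-form, further localized) to replace $[X/T]$ by $[X^{\mu_n}/T]$ in each summand. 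You instead Yoneda in $\mathrm{D}$ itself, turn the problem into one about $R(T)_{1/r}$-modules, and check it prime by prime using the localized concentration theorem of Remark~\ref{rk:prime}; the stratification of $\Spec R(T)_{1/r}$ by the loci $Z_m=\{H_\rho=\mu_m\}$ and their disjointness (which is exactly where the passage to $\bbZ[1/r]$ is used, via $\zeta_m$ having exact order $m$ in every residue field) is precisely the commutative algebra that, in the paper's approach, is hidden inside Vezzosi--Vistoli's proof. The trade-off: the paper's route is shorter and reuses the $\mathbb{S}\mathrm{ub}^G_0(X)$ machinery already built for Theorem~\ref{thm:localization}, at the price of importing a result whose proof contains the same support analysis; your route is self-contained modulo Remark~\ref{rk:prime}, makes transparent \emph{why} the decomposition holds (each prime ``sees'' at most one summand), and bypasses the $\mathbb{S}\mathrm{ub}^G_0(X)$ formalism entirely. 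Your finiteness argument is also different and in fact slightly stronger than the paper's: you show $\cC(T)$ itself is finite (finitely many orders, and for each $n$ only finitely many index-$n$ sublattices of $T^\vee\simeq\bbZ^r$), whereas the paper only shows that all but finitely many summands vanish, again by appeal to Vezzosi--Vistoli.
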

Intuitively speaking, Theorem \ref{thm:stabilizers} shows that all the information concerning the additive invariants of the quotient stack $[X/T]$ (no invertion is needed!) is ``concentrated'' in the quotient stacks $[X^{\mu_n}/T]$. 

Thanks to Theorem \ref{thm:stabilizers}, the above isomorphism \eqref{eq:stabilizers} holds for algebraic $K$-theory and all its variants, for cyclic homology and all its variants, for topological Hochschild homology and all its variants, etc. In the particular case of algebraic $K$-theory such an isomorphism was originally established by Vezzosi-Vistoli in \cite[\S3]{VV} under the weaker assumption that $X$ is a regular algebraic space. The remaining isomorphisms are, to the best of the authors' knowledge, new in the literature. 

\subsection*{Proofs}
Our proof of Theorem \ref{thm:localization}, resp. Theorem \ref{thm:stabilizers}, is different from the proof of Thomason, resp. of Vezzosi-Vistoli, in algebraic $K$-theory. Nevertheless, we do borrow some ingredients from their proofs. In fact, using a certain category of subschemes of the quotient stack $[X/G]$ (see \S\ref{sec:subschemes}), we are able to ultimately reduce the proof of Theorem \ref{thm:localization}, resp. Theorem \ref{thm:stabilizers}, to the proof of the $K_0$-case of Thomason's result, resp. of Vezzosi-Vistoli's result; consult \S\ref{sec:proof1} for details. Note, however, that
we {\em cannot} mimic Thomason's arguments, resp. Vezzosi-Vistoli's arguments, because they depend in an essential way
on the d\'evissage property of $G$-theory (=$K$-theory for smooth schemes), which does {\em not} hold for many
additive invariants. For example, as explained by Keller in \cite[Example 1.11]{Exact}, Hochschild homology, and consequently the mixed
  complex, do {\em not} satisfy d\'evissage.
\section{Preliminaries}
Throughout the article, $k$ will be a base field of characteristic $p\geq 0$.
\subsection{Dg categories}\label{sub:dg}
Let $(\cC(k),\otimes, k)$ be the category of (cochain) complexes of 
$k$-vector spaces. A {\em dg category $\cA$} is a category enriched over $\cC(k)$
and a {\em dg functor} $F\colon\cA\to \cB$ is a functor enriched over
$\cC(k)$; consult Keller's survey \cite{ICM-Keller}. Recall from \S\ref{sec:intro} that $\dgcat(k)$ stands for the category of (essentially small) dg categories.

Let $\cA$ be a dg category. The opposite dg category $\cA^\op$ has the
same objects and $\cA^\op(x,y):=\cA(y,x)$. The category $\dgHo(\cA)$ has the same objects as $\cA$ and morphisms $\dgHo(\cA)(x,y):=H^0 \cA(x,y)$, where $H^0(-)$ stands for the $0^{\mathrm{th}}$-cohomology functor. A {\em right dg
  $\cA$-module} is a dg functor $M\colon \cA^\op \to \cC_\dg(k)$ with values
in the dg category $\cC_\dg(k)$ of complexes of $k$-vector spaces. Let
us write $\cC(\cA)$ for the category of right dg
$\cA$-modules. Following \cite[\S3.2]{ICM-Keller}, the derived
category $\cD(\cA)$ of $\cA$ is defined as the localization of
$\cC(\cA)$ with respect to the objectwise quasi-isomorphisms. 

A dg functor $F\colon\cA\to \cB$ is called a {\em Morita equivalence} if it
induces an equivalence on derived categories $\cD(\cA) \stackrel{\simeq}{\to}
\cD(\cB)$; see \cite[\S4.6]{ICM-Keller}. As explained in
\cite[\S1.6]{book}, the category $\dgcat(k)$ admits a Quillen model
structure whose weak equivalences are the Morita equivalences. Let us
denote by $\Hmo(k)$ the associated homotopy category.

The {\em tensor product $\cA\otimes\cB$} of dg categories is defined
as follows: the set of objects is the cartesian product of the sets of objects of $\cA$ and $\cB$ and
$(\cA\otimes\cB)((x,w),(y,z)):= \cA(x,y) \otimes \cB(w,z)$. As
explained in \cite[\S2.3]{ICM-Keller}, this construction gives rise to
a symmetric monoidal structure on $\dgcat(k)$, which descends to $\Hmo(k)$.

A {\em dg $\cA\text{-}\cB$-bimodule} is a dg functor
$\mathrm{B}\colon \cA\otimes \cB^\op \to \cC_\dg(k)$ or, equivalently, a
right dg $(\cA^\op \otimes \cB)$-module. A standard example is the dg
$\cA\text{-}\cB$-bimodule
\begin{eqnarray}\label{eq:bimodule2}
{}_F\mathrm{B}:\cA\otimes \cB^\op \too \cC_\dg(k) && (x,z) \mapsto \cB(z,F(x))
\end{eqnarray}
associated to a dg functor $F:\cA\to \cB$. 
\subsection{Additive invariants}\label{sec:additive}
A functor $E\colon \dgcat(k)
\to \mathrm{D}$, with values in an additive category, is called an
{\em additive invariant} if it satisfies the following two conditions:
\begin{itemize}
\item[(i)] It sends the Morita equivalences (see \S\ref{sub:dg}) to isomorphisms.
\item[(ii)] Let $\cA \subseteq \cB$ and $\cC\subseteq \cB$ be dg categories inducing a semi-orthogonal decomposition $\dgHo(\cB)=\langle\dgHo(\cA), \dgHo(\cC) \rangle$ in the sense of Bondal-Orlov \cite{BO}. In this case, the inclusions $\cA \subseteq \cB$ and $\cC\subseteq \cB$ induce an isomorphism $E(\cA) \oplus E(\cC) \stackrel{\simeq}{\to} E(\cB)$.
\end{itemize}
Given small dg categories $\cA$ and $\cB$, let us write $\rep(\cA,\cB)$ for the full triangulated subcategory of $\cD(\cA^\op \otimes \cB)$ consisting of those dg $\cA\text{-}\cB$-modules $\mathrm{B}$ such that for every object $x \in \cA$ the associated right dg $\cB$-module $\mathrm{B}(x,-)\in \cD(\cB)$ belongs to the full triangulated subcategory of compact objects $\cD_c(\cB)$. As explained in \cite[\S1.6.3]{book}, there is a
  natural bijection between $\Hom_{\Hmo(k)}(\cA,\cB)$ and the set of
  isomorphism classes of the category $\rep(\cA,\cB)$. Moreover, under this bijection, the composition law of $\Hmo(k)$ corresponds to the (derived) tensor product of bimodules. 

The {\em additivization} $\Hmo_0(k)$ of $\Hmo(k)$ is defined as the category with the same objects as $\Hmo(k)$ and morphisms $\Hom_{\Hmo_0(k)}(\cA,\cB):=K_0\rep(\cA,\cB)$. Since the dg $\cA\text{-}\cB$-bimodules
  \eqref{eq:bimodule2} belong to $\rep(\cA,\cB)$, we have the symmetric monoidal functor:
\begin{eqnarray}\label{eq:universal}
\U\colon \dgcat(k) \too \Hmo_0(k) & \cA \mapsto \cA & (\cA \stackrel{F}{\to} \cB) \mapsto [{}_F\mathrm{B}]\,.
\end{eqnarray}
As explained in
  \cite[\S2.3]{book}, this functor is the {\em universal} additive invariant, \ie given any additive category $\mathrm{D}$, pre-composition with $\U$
  gives rise to an equivalence
\begin{eqnarray}\label{eq:equivalence1}
\Fun_{\mathrm{additive}}(\Hmo_0(k),\mathrm{D}) \stackrel{\simeq}{\too} \Fun_{\mathrm{add}}(\dgcat(k),\mathrm{D})\,,
\end{eqnarray}
where the left-hand side stands for the category of additive functors and the right-hand side for the category of additive invariants.
\subsection{Derived categories of quotient stacks}\label{sub:perfect}
Let $G$ be an affine group $k$-scheme of finite type and $X$ a quasi-compact quasi-separated $k$-scheme equipped with a $G$-action. Let us denote by $\mathrm{Mod}([X/G])$ the Grothendieck category of $G$-equivariant $\cO_X$-modules and by $\mathrm{Qcoh}([X/G])$, resp. by $\mathrm{coh}([X/G])$, the full subcategory of $G$-equivariant quasi-coherent, resp. coherent, $\cO_X$-modules. In what follows, we will write $\cD([X/G]):=\cD(\mathrm{Mod}([X/G]))$ for the derived category of the quotient stack $[X/G]$, $\cD_{\mathrm{Qcoh}}([X/G])\subset \cD([X/G])$ for the full triangulated subcategory of those complexes of $G$-equivariant $\cO_X$-modules whose cohomology belongs to $\mathrm{Qcoh}([X/G])$, $\cD^b(\mathrm{coh}([X/G]))\subset \cD_{\mathrm{Qcoh}}([X/G])$ for the full triangulated subcategory of bounded complexes of $G$-equivariant coherent $\cO_X$-modules, and $\perf([X/G]) \subset \cD^b(\mathrm{coh}([X/G]))$ for the full triangulated subcategory of perfect complexes of $G$-equivariant $\cO_X$-modules.

Let $\cE x$ be an exact category. As explained in \cite[\S4.4]{ICM-Keller}, the {\em derived dg category $\cD_\dg(\cE x)$ of $\cE x$} is defined as the Drinfeld's dg quotient $\cC_\dg(\cE x)/\cA{c}_\dg(\cE x)$ of the dg category of complexes over $\cE x$ by its full dg subcategory of acyclic complexes. 

Following the above notations, we will write $\cD_\dg([X/G])$ for the dg category $\cD_\dg(\cE x)$, with $\cE x:=\mathrm{Mod}([X/G])$, and $\cD_{\mathrm{Qcoh}, \dg}([X/G])$, $\cD^b_\dg(\mathrm{coh}([X/G]))$, and $\perf_\dg([X/G])$, for the corresponding full dg subcategories. 
\begin{proposition}[Trivial action]\label{prop:trivial}
Assume that the category $\cD_{\mathrm{Qcoh}}([\bullet/G])$ is compactly generated. Under this assumption, whenever the $G$-action on $X$ is trivial, we have the following Morita equivalence:
\begin{eqnarray}\label{eq:MoritaEquivalence}
\perf_{\dg}(X) \otimes \perf_{\dg}([\bullet/G])\too \perf_{\dg}([X/G]) && (\cF, V)\mapsto \cF\boxtimes V\,.
\end{eqnarray}
\end{proposition}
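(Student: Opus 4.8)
The plan is to identify the functor \eqref{eq:MoritaEquivalence} as an instance of a general ``external product'' Morita equivalence for derived categories of modules over a tensor product, and then to verify that the relevant compact generation hypotheses are met so that this equivalence descends to perfect complexes. First I would recall that for a quasi-compact quasi-separated scheme $X$ with trivial $G$-action, the quotient stack $[X/G]$ is the fibre product $X \times [\bullet/G]$ over $\Spec(k)$, and hence $\cD_{\mathrm{Qcoh}}([X/G])$ is the derived category of quasi-coherent sheaves on a product of (nice enough) stacks. By the general Künneth/tensor-product formula for derived categories of quasi-coherent sheaves --- valid here because $\perf_\dg(X)$ is a compact generator of $\cD_{\mathrm{Qcoh}}(X)$ (Bondal--Van den Bergh) and, by hypothesis, $\cD_{\mathrm{Qcoh}}([\bullet/G])$ is compactly generated --- there is an equivalence of (dg-enhanced) derived categories
$$
\cD_{\mathrm{Qcoh},\dg}(X) \otimes \cD_{\mathrm{Qcoh},\dg}([\bullet/G]) \;\isotoo\; \cD_{\mathrm{Qcoh},\dg}([X/G])
$$
sending $(\cF, V) \mapsto \cF \boxtimes V$, where the external product on the left is (a dg model for) the derived tensor product of cocomplete dg categories. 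The key point is that both factors are compactly generated, so the tensor product on the left is again compactly generated and the induced functor on the large categories is an equivalence.

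Next I would restrict this equivalence to compact objects. Since algebraic $K$-theory, and more relevantly for us the statement, only concerns the small dg category $\perf_\dg$, it suffices to observe that under the above equivalence the compact objects correspond on the left to the (idempotent completion of the) category generated by external products $\cF \boxtimes V$ with $\cF \in \perf_\dg(X)$ and $V \in \perf_\dg([\bullet/G])$, and on the right to $\perf_\dg([X/G])$ --- this last identification being the statement that perfect complexes on a quotient stack by an affine group scheme are precisely the compact objects of $\cD_{\mathrm{Qcoh}}([X/G])$, which holds once $\cD_{\mathrm{Qcoh}}([X/G])$ is compactly generated (and here it is, being a tensor product of two compactly generated categories). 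Passing to compact objects in a compactly generated category is compatible with the tensor product in the sense that $(\cC \otimes \cD)^c \simeq \mathrm{Idem}(\cC^c \otimes \cD^c)$; applying this and noting that $\perf_\dg(X)$ and $\perf_\dg([\bullet/G])$ are already idempotent complete yields exactly the asserted Morita equivalence \eqref{eq:MoritaEquivalence}.

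There are a couple of routine points to pin down: that the external product functor is well defined at the dg level and not merely up to quasi-isomorphism (handled by choosing the derived dg category models $\cD_\dg(\cE x)$ as in \cite[\S4.4]{ICM-Keller} and using flat/h-flat resolutions), and that $\cF \boxtimes V$ is genuinely perfect on $[X/G]$ when $\cF$ and $V$ are perfect (local on $X$, and then immediate since tensoring a perfect complex on $X$ with a perfect $G$-representation complex stays perfect). The main obstacle --- and the only place the hypothesis is used --- is establishing the large-category equivalence $\cD_{\mathrm{Qcoh},\dg}(X) \otimes \cD_{\mathrm{Qcoh},\dg}([\bullet/G]) \isotoo \cD_{\mathrm{Qcoh},\dg}([X/G])$: one must check that the natural functor is fully faithful on a set of compact generators (reducing to the computation $\RHom_{[X/G]}(\cF\boxtimes V, \cF'\boxtimes V') \simeq \RHom_X(\cF,\cF') \otimes_k \RHom_{[\bullet/G]}(V,V')$, i.e. a Künneth formula for $\RHom$ over the quotient stack) and that its essential image generates. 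The hypothesis that $\cD_{\mathrm{Qcoh}}([\bullet/G])$ is compactly generated is precisely what makes the bootstrap from generators to the whole category go through, and is automatic when, e.g., $G$ is diagonalizable (as in the applications), since then $\perf([\bullet/G]) = \Rep(G)$-perfect complexes compactly generate.
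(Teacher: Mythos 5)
Your proposal is correct and follows essentially the same route as the paper: the paper verifies directly, via the tensor--Hom relation $\mathbf{R}\Hom_{[X/G]}(\cF\boxtimes V,\cG)\simeq \mathbf{R}\Hom_{[\bullet/G]}(V,\mathbf{R}\Hom_X(\cF,\cG))$, that the external products of perfect complexes are compact, generate $\cD_{\mathrm{Qcoh}}([X/G])$, and satisfy the K\"unneth formula for $\mathbf{R}\Hom$ --- exactly the two checks you isolate as the ``main obstacle''. Your packaging in terms of the tensor product of compactly generated dg categories and passage to compacts is just a more formal wrapper around the same substantive inputs (compact generation of $\cD_{\mathrm{Qcoh}}(X)$ by perfects, the hypothesis on $\cD_{\mathrm{Qcoh}}([\bullet/G])$, and the K\"unneth/adjunction computation).
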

\begin{remark}\label{rk:compactgeneration}
As proved in \cite[Thm.~A]{HR} and \cite[Lem.~4.1]{HNR}, the category $\cD_{\mathrm{Qcoh}}([\bullet/G])$ is compactly generated if and only if $k$ is of characteristic zero or if $k$ is of positive characteristic and $\overline{G}:=G\otimes_k \overline{k}$ does not contains a copy of the additive group $\bbG_a$.
\end{remark}
\begin{proof}
Given any $\cF \in \cD_{\mathrm{Qcoh}}(X)$, any $V \in \cD_{\mathrm{Qcoh}}([\bullet/G])$, and any $\cG \in \cD_{\mathrm{Qcoh}}([X/G])$, we have the following classical tensor-Hom relation:
\begin{equation}\label{eq:relation}
{\bf R}\Hom_{[X/G]}(\cF \boxtimes V, \cG)\simeq {\bf R}\Hom_{[\bullet/G]}(V, {\bf R}\Hom_X(\cF,\cG))\,.
\end{equation}
The relation \eqref{eq:relation} implies that if $\cF$ and $V$ are compact objects, then $\cF\boxtimes V \in \cD_{\mathrm{Qcoh}}([X/G])$ is also a compact object. Moreover, if ${\bf R}\Hom_{[X/G]}(\cF\boxtimes V, \cG)=0$ for every $\cF$ and $V$, then $\cG$ is necessarily equal to zero. Furthermore, if $\cF$ and $V$ are perfect complexes, then $\cF\boxtimes V$ is also a perfect complex. Since the categories $\cD_{\mathrm{Qcoh}}(X)$ and $\cD_{\mathrm{Qcoh}}([\bullet/G])$ are compactly generated by perfect complexes (consult \cite[Thm.~3.1.1]{MRXXX1} and \cite[Thm.~A (b)]{HR}, respectively) the above three facts imply that the category $\cD_{\mathrm{Qcoh}}([X/G])$ is also compactly generated by perfect complexes. Finally, given any two perfect complexes $\cF_1, \cF_2 \in \perf(X)$ and any two $G$-representations $V_1, V_2 \in \perf([\bullet/G])$, note that \eqref{eq:relation} also implies that
$$ {\bf R}\Hom_{[X/G]}(\cF_1 \boxtimes V_1, \cF_2 \boxtimes V_2)\simeq {\bf R}\Hom_{[\bullet/G]}(V_1, V_2) \otimes {\bf R}\Hom_X(\cF_1, \cF_2)\,.$$
This allows us to conclude that the dg functor \eqref{eq:MoritaEquivalence} is a Morita equivalence.
\end{proof}
\subsection{Action of the Grothendieck ring}\label{sec:action}
Let $G$ be an affine group $k$-scheme of finite type and $X$ a quasi-compact quasi-separated $k$-scheme equipped with a $G$-action. Since the tensor product $-\otimes_X-$ makes the dg category $\perf_\dg([X/G])$ into a commutative monoid and the universal additive invariant \eqref{eq:universal} is symmetric monoidal, we obtain a commutative monoid $\U([X/G])$ in the category $\Hmo_0(k)$. Making use of the following natural ring isomorphism 
$$ \Hom_{\Hmo_0(k)}(\U(k),\U([X/G])):=K_0\rep(k, \perf_\dg([X/G]))\simeq K_0(\perf([X/G]))\,,$$
we hence conclude that the Grothendieck ring $K_0([X/G])\simeq  K_0(\perf([X/G]))$ acts on the object $\U([X/G])$ (and also that the monoid structure of $\U([X/G])$ is $K_0([X/G])$-linear). Concretely, this action can be explicitly described as follows:
\begin{eqnarray*}
K_0(\perf([X/G]))\too K_0(\rep(\perf_{\dg}([X/G]),\perf_{\dg}([X/G]))) & [\cF]\mapsto [{}_{(\cF\otimes_X -)} \mathrm{B}]\,. &
\end{eqnarray*}
Given any additive invariant $E\colon \dgcat(k) \to \mathrm{D}$, the equivalence of categories \eqref{eq:equivalence1} implies, by functoriality, that $K_0([X/G])$ acts on the object $E([X/G]) \in \mathrm{D}$.
\section{Category of subschemes of a quotient stack}
\label{sec:subschemes}
Let $G$ be a smooth affine group $k$-scheme of finite type and $X$ a smooth quasi-compact quasi-separated $k$-scheme equipped with a $G$-action. In this section, we construct a certain category\footnote{In the case of a constant finite group $k$-scheme $G$, a related category of $G$-equivariant smooth ``covers'' of $X$ was constructed in \cite[\S5]{Additive}.} $\mathbb{S}\mathrm{ub}^G_0(X)$ of $G$-stable smooth closed subschemes of $X$. This category, which is of independent interest, will play a key role in the proof of Theorems \ref{thm:localization} and \ref{thm:stabilizers}; consult \S\ref{sec:proofs} below.

\subsection*{Definition of the category $\mathbb{S}\mathrm{ub}^G_0(X)$} Let $\mathbb{S}\mathrm{ub}^G(X)$ be the category whose objects are the $G$-stable closed immersions $Y \stackrel{\tau}{\hookrightarrow} X$, with $Y$ a smooth quasi-compact quasi-separated $k$-scheme. In what follows, in order to simplify the exposition, we will often write $Y$. Given two objects $Y_1$ and $Y_2$, $\Hom_{\mathbb{S}\mathrm{ub}^G(X)}(Y_1, Y_2)$ is defined as the set of isomorphism classes of the bounded derived category
\begin{equation}\label{eq:coh-support}
\cD^b(\mathrm{coh}_{Y_1 \times_X Y_2}([(Y_1 \times Y_2)/G]))\subset \perf([(Y_1 \times Y_2)/G])
\end{equation}
of those $G$-equivariant coherent $\cO_{Y_1 \times Y_2}$-modules 
that are (topologically) supported on the closed subscheme $Y_1 \times_X Y_2$. Note that since the quotient stack $[(Y_1 \times Y_2)/G]$ is smooth, every bounded complex of $G$-equivariant coherent $\cO_{Y_1 \times Y_2}$-modules is perfect. Given three objects $Y_1$, $Y_2$, and $Y_3$, the composition law
$$\Hom_{\mathbb{S}\mathrm{ub}^G(X)}(Y_2,Y_3) \times \Hom_{\mathbb{S}\mathrm{ub}^G(X)}(Y_1,Y_2) \too \Hom_{\mathbb{S}\mathrm{ub}^G(X)}(Y_1,Y_3)$$
is induced by the classical (derived) ``pull-back/push-forward'' formula
\begin{equation}\label{eq:assignment}
(\cE_{23},\cE_{12}) \mapsto (q_{13})_\ast ((q_{23})^\ast(\cE_{23}) \otimes^{\bf L} (q_{12})^\ast(\cE_{12})) \,,
\end{equation}
where $q_{ij}$ stands for the projection from the triple fiber product onto its $ij$-factor. Finally, the identity of an object $Y$ is the (isomorphism class of the) $G$-equivariant structure sheaf $\cO_\Delta$ of the diagonal $\Delta$ in $Y \times Y$. 

The {\em additivization} $\mathbb{S}\mathrm{ub}^G_0(X)$ of $\mathbb{S}\mathrm{ub}^G(X)$ is defined by formally adding all finite direct sums to the category which has the same objects as $\mathbb{S}\mathrm{ub}^G(X)$ and morphisms 
$$\Hom_{\mathbb{S}\mathrm{ub}^G_0(X)}(Y_1,Y_2):=K_0(\cD^b(\mathrm{coh}_{Y_1 \times_X Y_2}([(Y_1 \times Y_2)/G])))\,.$$ 
Note that, since the above formula \eqref{eq:assignment} is exact in each one of the variables, the composition law of $\mathbb{S}\mathrm{ub}^G(X)$ extends naturally to $\mathbb{S}\mathrm{ub}^G_0(X)$. Let us denote by $\UU\colon \mathbb{S}\mathrm{ub}^G(X)\to \mathbb{S}\mathrm{ub}^G_0(X)$ the canonical functor. Note also that thanks to Quillen's d\'evissage theorem \cite[\S5]{Quillen} and to the definition of $G$-theory, we have isomorphisms:
$$
\Hom_{\mathbb{S}\mathrm{ub}^G_0(X)}(Y_1, Y_2)\simeq G_0([(Y_1 \times_X Y_2)/G])\,.
$$
In particular, we have ring isomorphisms: 
$$\End_{\mathbb{S}\mathrm{ub}^G_0(X)}(Y)\simeq G_0([Y/G]) \simeq K_0([Y/G])\,.$$
\subsection*{Relation(s) between the categories $\mathbb{S}\mathrm{ub}^G_0(X)$ and $\Hmo_0(k)$}
Given two objects $Y_1$ and $Y_2$ of the category $\mathbb{S}\mathrm{ub}^G(X)$, consider the exact functor
$$ \cD^b(\mathrm{coh}_{Y_1 \times_X Y_2}([(Y_1 \times Y_2)/G]))\too \rep(\perf_\dg([Y_1/G]), \perf_\dg([Y_2/G])$$
that sends a bounded complex of $G$-equivariant coherent $\cO_{Y_1 \times Y_2}$-modules $\cE_{12}$ (supported on the closed subscheme $Y_1 \times_X Y_2$) to the following Fourier-Mukai dg-functor: 
\begin{eqnarray*}
\label{eq:fm}
\Phi_{\cE_{12}}\colon\perf_\dg([Y_1/G])\too \perf_\dg([Y_2/G]) \qquad \cF \mapsto (q_2)_\ast((q_1)^\ast(\cF ) \otimes^{\bf L} \cE_{12})\,.
\end{eqnarray*}
By definition of the categories $\mathbb{S}\mathrm{ub}^G(X)$ and $\Hmo(k)$, the above constructions lead to a well-defined functor
\begin{eqnarray*}
\mathbb{S}\mathrm{ub}^G(X)\too \Hmo(k) & Y \mapsto \perf_\dg([Y/G]) & \cE_{12} \mapsto {}_{\Phi_{\cE_{12}}}\mathrm{B}\,,
\end{eqnarray*}
which naturally extends to the additive categories:
\begin{eqnarray*}
\Psi\colon \mathbb{S}\mathrm{ub}^G_0(X) \too \Hmo_0(k) && \UU(Y) \mapsto \U([Y/G])\,.
\end{eqnarray*}
\begin{remark}[Sheaves of algebras]\label{rk:sheaves}
Let $\cH$ be a flat quasi-coherent sheaf of algebras over $[X/G]$, \ie a $G$-equivariant flat quasi-coherent sheaf of algebras over $X$. Given two objects $Y_1\stackrel{\tau_1}{\hookrightarrow} X$ and $Y_2 \stackrel{\tau_2}{\hookrightarrow} X$ of $\mathbb{S}\mathrm{ub}^G(X)$, consider the exact functor
$$
\cD^b(\mathrm{coh}_{Y_1 \times_X Y_2}([(Y_1 \times Y_2)/G]))\to  \rep(\perf_\dg([Y_1/G];\tau_1^\ast(\cH)), 
\perf_\dg([Y_2/G];\tau_2^\ast(\cH)))
$$
defined, as above, by the assignment $\cE_{12} \mapsto {}_{\Phi_{\cE_{12}}}\mathrm{B}$. This leads to a functor
\begin{eqnarray*}
\mathbb{S}\mathrm{ub}^G(X)\too \Hmo(k) & (Y\stackrel{\tau}{\hookrightarrow} X) \mapsto \perf_\dg([Y/G]; \tau^\ast(\cH)) & \cE_{12} \mapsto {}_{\Phi_{\cE_{12}}} \mathrm{B}\,,
\end{eqnarray*}
which naturally extends to the additive categories:
\begin{eqnarray*}
\Psi_\cH\colon \mathbb{S}\mathrm{ub}^G_0(X) \too \Hmo_0(k) && \UU(Y \stackrel{\tau}{\hookrightarrow} X) \mapsto \U([Y/G]; \tau^\ast(\cH))\,.
\end{eqnarray*}
\end{remark}
\subsection*{Some properties of the category $\mathbb{S}\mathrm{ub}^G_0(X)$ and of the functor $\Psi$} In what follows, we describe three important properties that will be used in the sequel.
\subsubsection{Pull-back and push-forward}\label{rk:pullback}
Let $Y_1\stackrel{\tau_1}{\to} X$ and $Y_2\stackrel{\tau_2}{\to} X$ be two objects of the category $\mathbb{S}\mathrm{ub}^G(X)$. Given a $G$-stable closed immersion $\iota\colon Y_1 \hookrightarrow Y_2$ such that $\tau_2 \circ \iota = \tau_1$, its {\em pull-back} $\UU(\iota^\ast)\colon \UU(Y_2) \to \UU(Y_1)$, resp. {\em push-forward} $\UU(\iota_\ast)\colon \UU(Y_1) \to \UU(Y_2)$, is defined as the Grothendieck class $[(\iota \times_X\id)_\ast(\cO_{Y_1})]$, resp. $[(\id \times_X \iota)_\ast(\cO_{Y_1})]$, of the group $G_0([(Y_2\times_X Y_1)/G])$, resp. $G_0([(Y_1\times_X Y_2)/G])$. Note that $\Psi(\UU(\iota^\ast))=\U(\iota^\ast)$ and $\Psi(\UU(\iota_\ast))=\U(\iota_\ast)$.
\subsubsection{$K_0$-action}\label{rk:K0action}
Let $Y$ be an object of $\mathbb{S}\mathrm{ub}^G(X)$. The push-forward along the diagonal map 
$i_\Delta:Y \hookrightarrow Y\times Y$ leads to an exact functor
\begin{equation}\label{eq:diagonal}
(i_\Delta)_\ast\colon \perf([Y/G])\too  \cD^b(\mathrm{coh}_\Delta([(Y\times Y)/G]))
\end{equation}
that sends the tensor product $-\otimes_Y-$ on the left-hand side to the ``pull-back/push-forward'' formula \eqref{eq:assignment} on the right-hand side. Therefore, by applying $K_0(-)$ to \eqref{eq:diagonal}, we obtain an induced ring morphism $K_0([Y/G])\to \End_{\mathbb{S}\mathrm{ub}_0^G(X)}(\UU(Y))$. In other words, we obtain an action of $K_0([Y/G])$ on the object $\UU(Y)$.
\begin{lemma} \label{lem:compat} The functor $\Psi$ interchanges with the $K_0([Y/G])$-action on $\UU(Y)$ (defined above)  with the $K_0([Y/G])$-action on $\U([Y/G])$ (defined in \S\ref{sec:action}).
\end{lemma}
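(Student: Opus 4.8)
The plan is to unwind both $K_0([Y/G])$-actions into ring homomorphisms out of $K_0([Y/G])$ and to match them on the classes of perfect complexes, which additively generate $K_0([Y/G])$. On one side, composing $K_0(-)$ of the exact functor \eqref{eq:diagonal} with $\Psi$ gives the homomorphism $K_0([Y/G])\to\End_{\Hmo_0(k)}(\Psi(\UU(Y)))=\End_{\Hmo_0(k)}(\U([Y/G]))$ that describes the $\Psi$-image of the action on $\UU(Y)$; on the other side, the action on $\U([Y/G])$ of \S\ref{sec:action} is $[\cF]\mapsto[{}_{(\cF\otimes_Y-)}\mathrm{B}]$. Since both are homomorphisms of abelian groups and $K_0([Y/G])$ is generated by the classes $[\cF]$ with $\cF\in\perf([Y/G])$, it suffices to show that $\Psi$ sends the endomorphism $[(i_\Delta)_\ast\cF]$ of $\UU(Y)$ to the endomorphism $[{}_{(\cF\otimes_Y-)}\mathrm{B}]$ of $\U([Y/G])$. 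By the definition of $\Psi$ on morphisms we have $\Psi([(i_\Delta)_\ast\cF])=[{}_{\Phi_{(i_\Delta)_\ast\cF}}\mathrm{B}]$, so the lemma amounts to the assertion that the Fourier--Mukai dg functor $\Phi_{(i_\Delta)_\ast\cF}$ and the dg functor $\cF\otimes_Y(-)$ determine the same morphism $\U([Y/G])\to\U([Y/G])$ in $\Hmo_0(k)$.

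To establish this, I would produce a natural quasi-isomorphism $\Phi_{(i_\Delta)_\ast\cF}(\cG)\simeq\cF\otimes^{\bf L}_Y\cG$ for $\cG\in\perf([Y/G])$. Write $q_1,q_2\colon Y\times Y\to Y$ for the two projections; together with the diagonal immersion $i_\Delta\colon Y\hookrightarrow Y\times Y$ these are $G$-equivariant for the diagonal action and satisfy $q_1\circ i_\Delta=\id_Y=q_2\circ i_\Delta$. Because $i_\Delta$ is an affine closed immersion, the projection formula yields
$$
(q_1)^\ast(\cG)\otimes^{\bf L}(i_\Delta)_\ast(\cF)\;\simeq\;(i_\Delta)_\ast\big((i_\Delta)^\ast(q_1)^\ast(\cG)\otimes^{\bf L}\cF\big)\;\simeq\;(i_\Delta)_\ast\big(\cG\otimes^{\bf L}_Y\cF\big)\,,
$$
the last isomorphism using $q_1\circ i_\Delta=\id_Y$; applying $(q_2)_\ast$ and using $q_2\circ i_\Delta=\id_Y$ gives $\Phi_{(i_\Delta)_\ast\cF}(\cG)\simeq\cF\otimes^{\bf L}_Y\cG$, naturally in $\cG$ and functorially in $\cF$. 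All objects stay perfect since $[Y/G]$ is smooth, and the isomorphisms are $G$-equivariant since $i_\Delta,q_1,q_2$ are. Hence $\Phi_{(i_\Delta)_\ast\cF}$ and $\cF\otimes_Y(-)$ are isomorphic in $\Hmo(k)$, so the associated bimodules ${}_{\Phi_{(i_\Delta)_\ast\cF}}\mathrm{B}$ and ${}_{(\cF\otimes_Y-)}\mathrm{B}$ are isomorphic in $\cD(\perf_\dg([Y/G])^\op\otimes\perf_\dg([Y/G]))$ and therefore have the same class in $K_0\rep(\perf_\dg([Y/G]),\perf_\dg([Y/G]))$, which is the desired equality.

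The delicate point -- and the one I expect to be the main, essentially formal, obstacle -- is upgrading the derived-category computation of the previous paragraph to the level of dg functors and dg bimodules: one must fix the standard dg model of the Fourier--Mukai construction (via functorial h-flat/h-injective resolutions, or via the Drinfeld dg quotients already used to define $\cD_\dg([X/G])$) and check that the projection-formula and base-change isomorphisms lift to quasi-isomorphisms of the corresponding dg bimodules, naturally enough to be compatible with the exact structure in the variable $\cF$; with the conventions of \S\ref{sub:perfect} this is routine. Finally, running the same argument with $\perf_\dg([Y/G];\tau^\ast(\cH))$ in place of $\perf_\dg([Y/G])$ yields the analogous compatibility for the functor $\Psi_\cH$ of Remark~\ref{rk:sheaves}.
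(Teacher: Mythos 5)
Your proposal is correct and follows the same route as the paper: the paper's entire proof is the assertion that the square sending $\cF$ to $(i_\Delta)_\ast\cF$ and then to ${}_{\Phi_{(i_\Delta)_\ast\cF}}\mathrm{B}$ commutes (up to isomorphism) with $\cF\mapsto{}_{(\cF\otimes_Y-)}\mathrm{B}$, followed by applying $K_0(-)$. You simply supply the projection-formula computation $(q_2)_\ast\bigl((q_1)^\ast(\cG)\otimes^{\bf L}(i_\Delta)_\ast\cF\bigr)\simeq\cF\otimes^{\bf L}_Y\cG$ that the paper leaves implicit as ``classical,'' which is a welcome but not divergent elaboration.
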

\begin{proof}
Consider the following commutative diagram:
\begin{equation}\label{eq:diagram}
\xymatrix{
\perf([Y/G])\ar@{=}[d]\ar[rrr]^-{(i_\Delta)_\ast}&&& \cD^b(\mathrm{coh}_\Delta([(Y \times Y)/G]))\ar[d]^{\cE\mapsto  {}_{\Phi_\cE}\mathrm{B}}\\
\perf([Y/G])\ar[rrr]_-{\cF\mapsto{}_{(\cF\otimes_Y-)}\mathrm{B}}&&& \rep(\perf_{\dg}([Y/G]),\perf_{\dg}([Y/G]))\,.
}
\end{equation}
By applying $K_0(-)$ to \eqref{eq:diagram}, we obtain the claimed compatibility.
\end{proof}
\subsubsection{$K_0$-linearity}\label{rk:compatibility} 
Let $Y\stackrel{\tau}{\hookrightarrow} X$ be an object of $\mathbb{S}\mathrm{ub}^G(X)$. By composing the induced ring homomorphism $\tau^\ast\colon K_0([X/G]) \to K_0([Y/G])$ with the $K_0([Y/G])$-action on $\UU(Y)$ described in \S\ref{rk:K0action}, we obtain a $K_0([X/G])$-action on $\UU(Y)$. A simple verification shows that this $K_0([X/G])$-action is compatible with the morphisms of the category $\mathbb{S}\mathrm{ub}_0^G(X)$. This implies that $\mathbb{S}\mathrm{ub}^G_0(X)$ is a $K_0([X/G])$-linear category. Note that since the projection map $X \to \bullet$ induces a ring homomorphism $R(G) \to K_0([X/G])$, the category $\mathbb{S}\mathrm{ub}^G_0(X)$ is also $R(G)$-linear.
\section{Proofs}\label{sec:proofs}
In this section, making use of the category $\mathbb{S}\mathrm{ub}^G_0(X)$ of $G$-stable smooth closed subschemes of $X$ (consult \S\ref{sec:subschemes}), we prove Theorems \ref{thm:localization}, \ref{thm:roots} and \ref{thm:stabilizers}. 
\subsection*{Proof of Theorem \ref{thm:localization}}\label{sec:proof1}
Consider the following morphisms
\begin{eqnarray}\label{eq:morphisms1}
\UU(X) \stackrel{\UU(\iota^\ast)}{\too} \UU(X^H) && \UU(X^H) \stackrel{\UU(\iota_\ast)}{\too} \UU(X)
\end{eqnarray}
in the category $\mathbb{S}\mathrm{ub}^G_0(X)$. Since both these morphisms are $R(G)$-equivariant (see \S\ref{rk:compatibility}), they give rise to well-defined morphisms of ind-objects
\begin{eqnarray}\label{eq:morphisms2}
S^{-1}_H\UU(X) \stackrel{\UU(\iota^\ast)}{\too} S^{-1}_H\UU(X^H) && S^{-1}_H\UU(X^H) \stackrel{\UU(\iota_\ast)}{\too} S^{-1}_H\UU(X)
\end{eqnarray}
in the category $\mathrm{Ind}(\mathbb{S}\mathrm{ub}^G_0(X))$. Under the ring isomorphisms 
$$\End_{\mathbb{S}\mathrm{ub}^G_0(X)}(\UU(X^H))\simeq G_0([X^H/G])\simeq K_0([X^H/G])\,,$$ 
the composition $\UU(\iota^\ast) \circ \UU(\iota_\ast)$ of the morphisms \eqref{eq:morphisms1} (which by definition is given by $[\cO_{X^H}\otimes_X^{\bf L} \cO_{X^H}]$) corresponds to the Grothendieck class $\sum_j (-1)^j [\bigwedge^j(I/I^2)] \in K_0([X^H/G])$, where $I$ stands for the sheaf of ideals associated to the closed immersion $\iota\colon X^H \hookrightarrow X$. Consequently, since $I/I^2=\cN$, the composition $\UU(\iota^\ast) \circ \UU(\iota_\ast)$ of the morphisms \eqref{eq:morphisms2} corresponds to the following morphism of ind-objects
\begin{equation}\label{eq:morph1}
S^{-1}_H\UU(X^H) \stackrel{\lambda_{-1}(\cN)\cdot -}{\too} S^{-1}_H\UU(X^H)\,,
\end{equation}
where $-\cdot -$ stands for the induced action of the Grothendieck group $K_0([X^H/G])$ on the ind-object $S^{-1}_H\UU(X^H)$ (see \S\ref{rk:K0action}).
\begin{lemma}\label{lem:invertible}
The above morphism of ind-objects \eqref{eq:morph1} is invertible.
\end{lemma}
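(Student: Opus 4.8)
The plan is to reduce Lemma~\ref{lem:invertible} to a purely $K_0$-theoretic assertion — that the Grothendieck class $\lambda_{-1}(\cN)$ becomes invertible after inverting $S_H$ — and then to extract this from the $K_0$-case of Thomason's concentration theorem. The first step uses the ring isomorphism $\End_{\mathbb{S}\mathrm{ub}^G_0(X)}(\UU(X^H))\simeq G_0([X^H/G])\simeq K_0([X^H/G])$ recalled in \S\ref{sec:subschemes}: the morphism \eqref{eq:morph1} is, by construction, post-composition with the element $\lambda_{-1}(\cN)$ of this ring. Since the ind-object $S^{-1}_H\UU(X^H)$ is obtained from the representable presheaf $\Hom_{\mathbb{S}\mathrm{ub}^G_0(X)}(-,\UU(X^H))$ by tensoring over $R(G)$ with $S^{-1}_HR(G)$, and since $K_0([X^H/G])$ is commutative, the localized ring $S^{-1}_HR(G)\otimes_{R(G)}K_0([X^H/G])$ acts on $S^{-1}_H\UU(X^H)$; hence \eqref{eq:morph1} is invertible as soon as $\lambda_{-1}(\cN)$ is a unit in this ring, its inverse being post-composition with the inverse class.

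To prove that invertibility I would first decompose $\cN$ by $H$-weights. Because $H$ acts trivially on $X^H$ and $G$ is abelian — so that the $H$-eigen-decomposition of any $G$-equivariant bundle is automatically $G$-stable — we obtain a $G$-equivariant splitting $\cN=\bigoplus_{\chi\in H^\vee}\cN_\chi$; moreover, since $X^H$ is precisely the $H$-fixed locus, the trivial-weight summand vanishes, so $\cN=\bigoplus_{\chi\neq 1}\cN_\chi$ and $\lambda_{-1}(\cN)=\prod_{\chi\neq1}\lambda_{-1}(\cN_\chi)$. It therefore suffices to invert each factor $\lambda_{-1}(\cN_\chi)$. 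Here I would invoke the flag-bundle form of the projective bundle formula for the smooth quotient stacks in play, which presents the Grothendieck ring of a flag bundle as a \emph{nonzero finite free} module (even algebra) over the Grothendieck ring of the base; for such an extension $A\to B$, an element of $A$ that becomes a unit in $B$ is already a unit in $A$ (as $B\otimes_A A/(a)$ is then $0$), and the same holds after applying $S^{-1}_H$. This lets one reduce to the case where $\cN_\chi$ carries a $G$-equivariant filtration by line sub-bundles with quotients $L_j$ still of pure $H$-weight $\chi$, so that $[\cN_\chi]=\sum_j[L_j]$ and $\lambda_{-1}(\cN_\chi)=\prod_j(1-[L_j])$. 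Thus everything comes down to showing that $1-[L]$ is invertible in $S^{-1}_HR(G)\otimes_{R(G)}K_0([Z/G])$ for an arbitrary smooth $G$-scheme $Z$ with trivial $H$-action and an arbitrary $G$-equivariant line bundle $L$ on $Z$ whose $H$-weight $\chi$ is non-trivial.

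This last reduction is the crux and the step I expect to be the main obstacle. When $Z$ is a point it is immediate, since $1-[L]$ is then one of the defining generators of $S_H$; but for a general $Z$ the class $[L]$ is not pulled back from $R(G)$, and genuinely controlling the difference is exactly the content of the $K_0$-case of Thomason's concentration theorem \cite[Thm.~2.1 and its proof]{Thomason} (building on Segal's analysis of $R(G)$-localization \cite{Segal}), which applies here because $X$, hence $X^H$ and the auxiliary flag bundles, are smooth; I would simply cite it. For orientation, in the special case $H=G$ one can argue directly: $X^G$ carries the trivial $G$-action, so $K_0([X^G/G])\simeq K_0(X^G)\otimes_\bbZ R(G)$ (eigen-bundle decomposition, cf. Proposition~\ref{prop:trivial} and Remark~\ref{rk:compactgeneration}); writing $[L]=\chi\cdot[M]$ with $M:=L\otimes\chi^{-1}$ of trivial weight, the class $[M]-1$ comes from the augmentation ideal of $K_0(X^G)$ and is nilpotent by the usual $\gamma$-filtration argument on the smooth finite-dimensional scheme $X^G$, whence $1-[L]=(1-\chi)\bigl(1-\chi(1-\chi)^{-1}([M]-1)\bigr)$ is a unit once the element $1-\chi\in S_H$ is inverted. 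Feeding this back through the two reductions above yields the invertibility of $\lambda_{-1}(\cN)$, and hence of the morphism \eqref{eq:morph1}.
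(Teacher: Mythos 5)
Your proof is correct and follows essentially the same route as the paper: both reduce the invertibility of \eqref{eq:morph1} to the invertibility of multiplication by $\lambda_{-1}(\cN)$ on $S^{-1}_H K_0([X^H/G])$ (you via the $S^{-1}_HR(G)\otimes_{R(G)}K_0([X^H/G])$-action on the ind-object, the paper via the Yoneda lemma and the presentation of $S^{-1}_HR(G)$ as a filtered colimit of finite free $R(G)$-modules), and then appeal to Thomason. The paper simply cites \cite[Lem.~3.2]{Thomason} for this $K_0$-statement, whereas your weight-decomposition and splitting-principle paragraphs reproduce the internals of that lemma, correctly isolating the one genuinely non-formal step (invertibility of $1-[L]$ for a line bundle of non-trivial $H$-weight on a general base) which, as you acknowledge, must still be taken from Thomason/Segal.
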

\begin{proof}
Thanks to the Yoneda lemma, it is enough to show that \eqref{eq:morph1} becomes an isomorphism after application of the functor $\Hom_{\mathrm{Ind}(\mathbb{S}\mathrm{ub}^G_0(X))}(S^{-1}_H\UU(X^H),-)$. Recall that $S^{-1}_HR(G)$ can be written as a filtered colimit of free finite $R(G)$-modules. Therefore, it suffices to show that \eqref{eq:morph1} becomes an isomorphism after application of the functor $\Hom_{\mathrm{Ind}(\mathbb{S}\mathrm{ub}^G_0(X))}(\UU(X^H),-)$. By definition of the category $\mathrm{Ind}(\mathbb{S}\mathrm{ub}^G_0(X))$, this latter claim is equivalent to the invertibility of the following homomorphism of abelian groups:
\begin{eqnarray}\label{eq:morph2}
S^{-1}_HK_0([X^H/G]) \too S^{-1}_H K_0([X^H/G]) && \alpha \mapsto \lambda_{-1}(\cN) \cdot \alpha\,.
\end{eqnarray}
As proved by Thomason in \cite[Lem.~3.2]{Thomason}, \eqref{eq:morph2} is indeed invertible.
\end{proof}
Thanks to Lemma \ref{lem:invertible}, we can now consider the following composition:
\begin{equation}\label{eq:composition1}
S^{-1}_H \UU(X^H) \stackrel{(\lambda_{-1}(\cN)\cdot -)^{-1}}{\too} S^{-1}_H\UU(X^H) \stackrel{\UU(\iota_\ast)}{\too} S^{-1}_H\UU(X)\,.
\end{equation}
\begin{proposition}\label{prop:invertible}
The morphism of ind-objects $\UU(\iota^\ast)\colon S^{-1}_H\UU(X) \to S^{-1}_H\UU(X^H)$ is invertible. Moreover, its inverse is given by the above composition \eqref{eq:composition1}.
\end{proposition}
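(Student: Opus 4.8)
The plan is to reduce Proposition~\ref{prop:invertible} to the $K_0$-case of Thomason's concentration theorem \cite[Thm.~2.1]{Thomason}, exploiting that $\End_{\mathbb{S}\mathrm{ub}^G_0(X)}(\UU(X))\simeq K_0([X/G])$ and that post-composition with $\UU(\iota^\ast)$ realizes, on these endomorphism rings, precisely the pull-back homomorphism~$\iota^\ast$.

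First I would observe that the composition \eqref{eq:composition1}, which I denote by $C$, is a right inverse of $\UU(\iota^\ast)$ essentially by construction. Indeed, by \eqref{eq:morph1} and Lemma~\ref{lem:invertible} the composite $\UU(\iota^\ast)\circ\UU(\iota_\ast)$, viewed as an endomorphism of $S^{-1}_H\UU(X^H)$, is the invertible morphism $(\lambda_{-1}(\cN)\cdot-)$; hence, unwinding the definition of $C$,
$$
\UU(\iota^\ast)\circ C = \big(\UU(\iota^\ast)\circ\UU(\iota_\ast)\big)\circ(\lambda_{-1}(\cN)\cdot-)^{-1} = (\lambda_{-1}(\cN)\cdot-)\circ(\lambda_{-1}(\cN)\cdot-)^{-1} = \id_{S^{-1}_H\UU(X^H)}\,.
$$
It therefore remains to prove that $C$ is also a left inverse, i.e. that $C\circ\UU(\iota^\ast)=\id_{S^{-1}_H\UU(X)}$.

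Next I would note that, thanks to the identity $\UU(\iota^\ast)\circ C=\id$, one has $\UU(\iota^\ast)\circ\big(C\circ\UU(\iota^\ast)-\id\big)=0$; since the difference $C\circ\UU(\iota^\ast)-\id$ lies in the ring $\End_{\mathrm{Ind}(\mathbb{S}\mathrm{ub}^G_0(X))}(S^{-1}_H\UU(X))$, it suffices to show that post-composition with $\UU(\iota^\ast)$ yields an injective homomorphism
$$
\End_{\mathrm{Ind}(\mathbb{S}\mathrm{ub}^G_0(X))}(S^{-1}_H\UU(X))\too \Hom_{\mathrm{Ind}(\mathbb{S}\mathrm{ub}^G_0(X))}(S^{-1}_H\UU(X),S^{-1}_H\UU(X^H))\,.
$$
Here I would use that $\UU(X)$ is a compact object of $\mathbb{S}\mathrm{ub}^G_0(X)$ and that $S^{-1}_HR(G)$ is a filtered colimit of finite free $R(G)$-modules in order to identify the source with $S^{-1}_H\End_{\mathbb{S}\mathrm{ub}^G_0(X)}(\UU(X))$, the target with $S^{-1}_H\Hom_{\mathbb{S}\mathrm{ub}^G_0(X)}(\UU(X),\UU(X^H))$, and the displayed map with the localization at $S_H$ of the corresponding un-localized post-composition map. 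Since $X\times_X X=X$ and $X\times_X X^H=X^H$, the isomorphisms recorded in \S\ref{sec:subschemes} identify $\End_{\mathbb{S}\mathrm{ub}^G_0(X)}(\UU(X))\simeq K_0([X/G])$ and $\Hom_{\mathbb{S}\mathrm{ub}^G_0(X)}(\UU(X),\UU(X^H))\simeq K_0([X^H/G])$; and a direct Fourier--Mukai computation, using the description of $\UU(\iota^\ast)$ recalled in \S\ref{rk:pullback}, shows that under these identifications post-composition with $\UU(\iota^\ast)$ becomes the pull-back ring homomorphism $\iota^\ast\colon K_0([X/G])\to K_0([X^H/G])$. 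In this way the problem is reduced to the injectivity of $\iota^\ast\colon S^{-1}_HK_0([X/G])\to S^{-1}_HK_0([X^H/G])$, which is immediate from Thomason's concentration theorem \cite[Thm.~2.1]{Thomason}, where this map is in fact shown to be an isomorphism.

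The step I expect to require the most care is not conceptual but bookkeeping: checking that the Hom-groups in $\mathrm{Ind}(\mathbb{S}\mathrm{ub}^G_0(X))$ are obtained by localizing those of $\mathbb{S}\mathrm{ub}^G_0(X)$ (a formal consequence of the compactness of the objects $\UU(Y)$ together with the filtered-colimit structure of $S^{-1}_HR(G)$), and --- more delicately --- that the composite $\UU(\iota^\ast)\circ(\alpha\cdot-)$ really corresponds to $\iota^\ast(\alpha)$ under the identifications above, with no Euler-class correction; the correction term would be governed by the excess intersection bundle of $\iota$, which vanishes here because one of the two subschemes in play is $X$ itself. The only genuine external input is Thomason's $K_0$-theoretic concentration theorem, quoted as a black box; this is also the sole place where the smoothness of $X$ is used.
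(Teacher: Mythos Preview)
Your proposal is correct and follows essentially the same approach as the paper: both arguments first observe that $C$ is a right inverse of $\UU(\iota^\ast)$ by construction, then reduce the remaining statement $C\circ\UU(\iota^\ast)=\id$ to the invertibility (or injectivity) of the map $S^{-1}_HK_0([X/G])\to S^{-1}_HK_0([X^H/G])$, which is Thomason's $K_0$-concentration theorem. The paper phrases the reduction as ``apply $\Hom(S^{-1}_H\UU(X),-)$ and use Yoneda'', while you spell out the underlying idempotent argument and the Fourier--Mukai identification more explicitly; but the substance is the same.
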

\begin{proof}
Thanks to Lemma \ref{lem:invertible}, $S^{-1}_H\UU(X^H)$ is a direct summand of $S^{-1}_H\UU(X)$. Therefore, using the Yoneda lemma, it is enough to show that $\UU(\iota^\ast)$ becomes an isomorphism after application of the functor $\Hom_{\mathrm{Ind}(\mathbb{S}\mathrm{ub}^G_0(X))}(S^{-1}_H\UU(X),-)$. Moreover, similarly to the proof of Lemma \ref{lem:invertible}, it suffices to show that $\UU(\iota^\ast)$ becomes an isomorphism after application of the functor $\Hom_{\mathrm{Ind}(\mathbb{S}\mathrm{ub}^G_0(X))}(\UU(X),-)$. By definition of the category $\mathrm{Ind}(\mathbb{S}\mathrm{ub}^G_0(X))$, this latter claim is equivalent to the invertibility of the following  homomorphism of abelian groups:
\begin{equation}\label{eq:morph3}
K_0(\iota^\ast)\colon S^{-1}_H K_0([X/G]) \too S^{-1}_H K_0([X^H/G])\,.
\end{equation}
As proved by Thomason in \cite[Thm.~2.1 and Lem.~3.3]{Thomason}, \eqref{eq:morph3} is indeed invertible. 

Finally, note that the composition \eqref{eq:composition1} is the right-inverse of $\UU(\iota^\ast)$. Since $\UU(\iota^\ast)$ is invertible, \eqref{eq:composition1} is also the left-inverse of $\UU(\iota^\ast)$.
\end{proof}
We now have the ingredients necessary to conclude the proof of Theorem \ref{thm:localization}. As explained in \S\ref{rk:pullback}, resp. \S\ref{rk:K0action}, resp. \S\ref{rk:compatibility}, the functor $\Psi\colon \mathbb{S}\mathrm{ub}^G_0(X) \to \Hmo_0(k)$ is compatible with pull-backs and push-forwards, resp. with $K_0$-actions, resp. with $R(G)$-actions. Moreover, it extends naturally to the categories of ind-objects:
\begin{equation}\label{eq:ind-functor}
\mathrm{Ind}(\Psi)\colon \mathrm{Ind}(\mathbb{S}\mathrm{ub}^G_0(X)) \too \mathrm{Ind}(\Hmo_0(k))\,.
\end{equation}
Therefore, by combining the preceding functor \eqref{eq:ind-functor} with Proposition \ref{prop:invertible}, we conclude that the morphism of ind-objects $\U(\iota^\ast)\colon S^{-1}_H \U([X/G]) \to S^{-1}_H \U([X^H/G])$ is invertible and that its inverse is given by the following composition:
$$S^{-1}_H \U([X^H/G]) \stackrel{(\lambda_{-1}(\cN)\cdot -)^{-1}}{\too} S^{-1}_H \U([X^H/G]) \stackrel{\U(\iota_\ast)}{\too} S^{-1}_H\U([X/G])\,.
$$
This proves Theorem \ref{thm:localization} in the case of the universal additive invariant. The general case follows now from the equivalence of categories \eqref{eq:equivalence1} and from the fact that every additive functor $\Hmo_0(k) \to \mathrm{D}$ extends naturally to the categories of ind-objects.
\begin{remark}[Generalization]\label{rk:generalization}
Let $\cH$ be a flat quasi-coherent sheaf of algebras over $[X/G]$, \ie a $G$-equivariant flat quasi-coherent sheaf of algebras over $X$. A proof similar to the above one, with $\Psi$ replaced by the functor $\Psi_\cH$ (see Remark \ref{rk:sheaves}), allows us to conclude that Theorem \ref{thm:localization} holds more generally with $S^{-1}_HE([X/G])$ and $S^{-1}_HE([X^H/G])$ replaced by $S^{-1}_HE([X/G];\cH)$ and $S^{-1}_HE([X^H/G];\iota^\ast(\cH))$.
\end{remark}
\subsection*{Proof of Theorem \ref{thm:roots}}
Note first that since $R(G)\simeq \bbZ[G^\vee]$ and $G^\vee$ is a finitely generated abelian group, the abelian group $R(G)$ belongs to $\mathrm{Ind}(\mathrm{free}(\bbZ))$. 

Since $\overline{G}:=G\otimes_k \overline{k}$ does not contains a copy of the additive group $\bbG_a$ (in any caracteristic) and the $G$-action on $X^G$ is trivial, Proposition \ref{prop:trivial} and Remark \ref{rk:compactgeneration} yield a Morita equivalence $\perf_\dg(X^G)\otimes \perf_\dg([\bullet/G]) \to \perf_\dg([X^G/G])$. Therefore, using the fact that the universal additive invariant \eqref{eq:universal} is symmetric monoidal, we obtain an induced isomorphism
\begin{equation}\label{eq:isomorphism-2}
\U(X^G) \otimes \U([\bullet/G]) \stackrel{\simeq}{\too} \U([X^G/G])\,.
\end{equation} 
Recall from \S\ref{sec:additive}-\S\ref{sec:action} that the object $\U([\bullet/G])\in \Hmo_0(k)$ carries a canonical commutative monoid structure and that we have natural ring isomorphisms:
\begin{equation}\label{eq:isos}
\Hom_{\Hmo_0(k)}(\U(k),\U([\bullet/G]))\simeq K_0([\bullet/G]) \simeq R(G) \simeq \bbZ[G^\vee]\,.
\end{equation}
Using the characters of $G$, we hence obtain an induced morphism of ind-objects:
\begin{equation}\label{eq:isomorphism-3}
\U(k)\otimes_\bbZ R(G) \too \U([\bullet/G])\,.
\end{equation}
\begin{proposition}
The above morphism of ind-objects \eqref{eq:isomorphism-3} is invertible.
\end{proposition}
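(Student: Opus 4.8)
The plan is to reduce, using the semisimplicity of the representation category of $G$, to a computation of $\U$ on a coproduct of copies of $k$ indexed by the characters of $G$, and then to pass to a filtered colimit.

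\emph{Step 1: a Morita reduction.} Since $G$ is diagonalizable, $G\simeq D(G^\vee)$ and $\mathrm{coh}([\bullet/G])=\mathrm{Rep}(G)$ is equivalent to the category of finite-dimensional $G^\vee$-graded $k$-vector spaces; in particular it is semisimple, its simple objects are (up to isomorphism) exactly the characters $k_\chi$, $\chi\in G^\vee$, and the Hom-complex $\Hom^\bullet(k_\chi,k_\psi)$ in $\perf_{\dg}([\bullet/G])$ is quasi-isomorphic to $\delta_{\chi\psi}\cdot k$ concentrated in degree $0$. Hence the full dg subcategory $\cB_0\subseteq\perf_{\dg}([\bullet/G])$ on the objects $\{k_\chi\}_{\chi\in G^\vee}$ is quasi-equivalent to the coproduct $\coprod_{\chi\in G^\vee}k$ of copies of the base field, viewed as one-object dg categories; and since the $k_\chi$ are compact objects generating $\cD_{\mathrm{Qcoh}}([\bullet/G])$ --- which is compactly generated by Remark \ref{rk:compactgeneration}, as the diagonalizable group $\overline{G}$ contains no copy of $\bbG_a$ --- the inclusion $\cB_0\hookrightarrow\perf_{\dg}([\bullet/G])$ is a Morita equivalence. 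As $\U$ sends Morita equivalences to isomorphisms, we obtain a canonical isomorphism $\U\bigl(\coprod_{\chi\in G^\vee}k\bigr)\stackrel{\simeq}{\to}\U([\bullet/G])$.

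\emph{Step 2: the finite stages.} Write $\coprod_{\chi\in G^\vee}k=\mathrm{colim}_{F}\coprod_{\chi\in F}k$ as a filtered colimit in $\dgcat(k)$ over the finite subsets $F\subseteq G^\vee$. For each such $F$ there are no nonzero morphisms between the distinct summands, so $\coprod_{\chi\in F}k$ admits an iterated semi-orthogonal decomposition into $|F|$ copies of $k$, and the additivity axiom for $\U$ gives a natural isomorphism $\U\bigl(\coprod_{\chi\in F}k\bigr)\simeq\bigoplus_{\chi\in F}\U(k)$. Passing to the colimit over $F$ and unwinding the identifications \eqref{eq:isos}, one checks that the resulting morphism of ind-objects $\mathrm{colim}_F\bigoplus_{\chi\in F}\U(k)=\U(k)\otimes_{\bbZ}R(G)\to\U([\bullet/G])$ is exactly \eqref{eq:isomorphism-3}: its component indexed by a character $\chi$ is the morphism $\U(k)\to\U([\bullet/G])$ classified by $[k_\chi]\in R(G)$.

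\emph{Step 3: passing to the colimit.} It remains to prove that $\U$ carries the filtered colimit of Step 2 to the corresponding ind-colimit in $\mathrm{Ind}(\Hmo_0(k))$; this is the main obstacle. Concretely, one must check that for every dg category $\cA$ the natural homomorphism
$$\mathrm{colim}_{F}\,\bigoplus_{\chi\in F}K_0\rep(\cA,k)\too K_0\rep\Bigl(\cA,\coprod_{\chi\in G^\vee}k\Bigr)$$
is an isomorphism; the case $\cA=k$ is precisely \eqref{eq:isos}. I would establish this by analysing $\rep(\cA,\coprod_{\chi}k)$ objectwise --- a dg bimodule $B$ belongs to $\rep$ if and only if, for every object $x\in\cA$, the restriction $B(x,-)$ is compact and hence concentrated on finitely many characters --- together with the fact that the triangulated structure, and therefore the $K_0$, of $\rep(\cA,\coprod_{\chi}k)$ is computed componentwise over $\chi\in G^\vee$. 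Combining Steps 1--3 yields that \eqref{eq:isomorphism-3} is invertible.
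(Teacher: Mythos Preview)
Your Step~1 matches the paper's opening move: both establish that $\perf_\dg([\bullet/G])$ is Morita equivalent to $\coprod_{\chi\in G^\vee}k$. Your route via the identification of $\mathrm{Rep}(G)$ with $G^\vee$-graded vector spaces is somewhat more direct than the paper's, which invokes the roots-of-unity hypothesis to identify each $\mu_{l^\nu}$ with a constant group scheme before appealing to semisimplicity.

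The real divergence is in how the Morita equivalence is turned into the invertibility of \eqref{eq:isomorphism-3}. The paper works on the \emph{covariant} side: from $\rep\bigl(\coprod_\chi k,\cB\bigr)\simeq\prod_\chi\rep(k,\cB)$ for every dg category $\cB$, together with the fact that $K_0$ preserves products, it obtains
\[
\Hom_{\Hmo_0(k)}(\U([\bullet/G]),\cB)\simeq\textstyle\prod_\chi\Hom_{\Hmo_0(k)}(\U(k),\cB)\,,
\]
so that $\U([\bullet/G])$ \emph{is} the coproduct $\bigoplus_\chi\U(k)$ already in $\Hmo_0(k)$. The paper then combines this with the observation that $\Hom(\U(k),-)$ applied to \eqref{eq:isomorphism-3} is an isomorphism.

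Your Steps~2--3 instead attack the \emph{contravariant} side, aiming to show that
\[
\textstyle\bigoplus_\chi K_0\rep(\cA,k)\too K_0\rep\bigl(\cA,\coprod_\chi k\bigr)
\]
is an isomorphism for every $\cA$. Here there is a genuine gap. Your sketch in Step~3 (``the $K_0$ is computed componentwise'') does not go through when $\cA$ has infinitely many isomorphism classes of objects and $G^\vee$ is infinite. Take $G=\bbG_m$ and $\cA=\coprod_{n\in\bbN}k$: an object of $\rep\bigl(\cA,\coprod_{\chi\in\bbZ}k\bigr)$ is a family $(B_{n,\chi})$ of perfect complexes with each \emph{row} $(B_{n,\chi})_\chi$ finitely supported, but with no global bound over $\chi$. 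The ``diagonal'' bimodule given by $B_{n,\chi}=k$ if $\chi=n$ and $0$ otherwise has nonzero projection to $K_0\rep(\cA,k)$ for infinitely many $\chi$, so its class does not lie in the image of $\bigoplus_\chi K_0\rep(\cA,k)$. The paper's covariant approach sidesteps this entirely, since $\rep$ \emph{out of} a coproduct of dg categories is simply a product, and no such support issue arises.
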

\begin{proof}
Note that, thanks to the ring isomorphisms \eqref{eq:isos}, by applying the functor $\Hom_{\mathrm{Ind}(\Hmo_0(k))}(\U(k),-)$ to \eqref{eq:isomorphism-3} we obtain an isomorphism. Hence, in order to prove that \eqref{eq:isomorphism-3} is invertible, it is enough to show that $\U([\bullet/G])$ is isomorphic to a (possibly infinite) direct sum of copies of $\U(k)$. 

Recall that we have an isomorphism $G\simeq \bbG_m^{\times r} \times \mu_{l_1^{\nu_1}} \times \cdots \times \mu_{l_s^{\nu_s}}$ for some prime numbers $l_1, \ldots, l_s$ and non-integers $r, \nu_1, \ldots, \nu_s$. The multiplicative group $k$-scheme $\bbG_m$ is semi-simple. Moreover, the simple $\bbG_m$-representations $V$ are parametrized by the group of characters $\bbG_m^\vee$ and we have $\End_{\bbG_m}(V)\simeq k$. Since, by assumption, $k$ contains the $l^{\mathrm{th}}$ roots of unity, with $l=l_1, \ldots, l_s$, the group $k$-schemes $\mu_{l_1^{\nu_1}}, \ldots,  \mu_{l_v^{\nu_s}}$ are isomorphic to the constant finite group $k$-schemes $C_{l_1^{\nu_1}}, \ldots, C_{l_s^{\nu_s}}$, respectively. In particular, they are semi-simple. Moreover, the simple $\mu_{l^\nu}$-representations $V$ are parametrized by the group of characters $\mu_{l^\nu}^\vee$ and we have $\End_{\mu_{l^\nu}}(V)\simeq k$. These considerations imply that the group $k$-scheme $G$ is also semi-simple and that the dg category $\perf_\dg([\bullet/G])$ is Morita equivalent to the disjoint union $\coprod_{\chi \in G^\vee}k$. Consequently, since $\rep(\coprod_{\chi \in G^\vee}k, \cB)\simeq \prod_{\chi \in G^\vee} \rep(k,\cB)$ for every dg category $\cB$ and the functor $K_0(-)$ preserves arbitrary products, we obtain canonical isomorphisms:
$$ \Hom_{\Hmo_0(k)}(\U([\bullet/G]),\cB)\simeq \prod_{\chi \in G^\vee} \Hom_{\Hmo_0(k)}(\U(k),\cB)\,.$$
This shows not only that the (possibly infinite) direct sum $\bigoplus_{\chi \in G^\vee}\U(k)$ exists in the category $\Hmo_0(k)$, but moreover that $\U([\bullet/G])\simeq \bigoplus_{\chi \in G^\vee}\U(k)$. 
\end{proof}
The above isomorphisms \eqref{eq:isomorphism-2} with \eqref{eq:isomorphism-3} yield an isomorphism of ind-objects $\U(X^G) \otimes_\bbZ R(G) \stackrel{\simeq}{\to} \U([X^G/G])$. Under this latter isomorphism, the natural action of $R(G)$ on the right-hand side corresponds to the canonical $R(G)$-action on $R(G)$. Consequently, we obtain an induced isomorphism of ind-objects:
\begin{equation}\label{eq:isomorphism-5}
\U(X^G) \otimes_\bbZ S^{-1}_G R(G) \stackrel{\simeq}{\too} S^{-1}_G \U([X^G/G])\,.
\end{equation} 
Finally, by combining \eqref{eq:isomorphism-5} with the (inverse of the) isomorphism of ind-objects $S^{-1}_G \U([X/G]) \stackrel{\simeq}{\to} S^{-1}_G \U([X^G/G])$ provided by Theorem \ref{thm:localization}, we obtain an isomorphism of ind-objects $S^{-1}_G \U([X/G])\simeq \U(X^G) \otimes_\bbZ S^{-1}_G R(G)$. This proves Theorem \ref{thm:roots} in the case of the universal additive invariant. The general case follows now from the equivalence of categories \eqref{eq:equivalence1} and from the fact that the natural extension of every additive functor $\Hmo_0(k) \to \mathrm{D}$ to the categories of ind-objects is compatible with the induced action $-\otimes_\bbZ -$ of the category $\mathrm{Ind}(\mathrm{free}(\bbZ))$.
\subsection*{Proof of Theorem \ref{thm:stabilizers}}
Let us denote by $\Hmo_0(k)_{1/r}$, resp. by $\mathbb{S}\mathrm{ub}^G_0(X)_{1/r}$, the $\bbZ[1/r]$-linear category obtained by tensoring the abelian groups of morphisms of $\Hmo_0(k)$, resp. of $\mathbb{S}\mathrm{ub}^G_0(X)$, with $\bbZ[1/r]$. In the same vein, let us denote by $\Psi_{1/r}\colon \mathbb{S}\mathrm{ub}^G_0(X)_{1/r} \to \Hmo_0(k)_{1/r}$ the induced $\bbZ[1/r]$-linear functor.

\begin{lemma}\label{lem:finite}
The set of ind-objects $\{\underline{S}^{-1}_{\mu_n} \UU(X)_{1/r}\,|\, \mu_n \in \cC(T) \}$ is finite.
\end{lemma}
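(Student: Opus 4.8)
The plan is to reduce the assertion to the elementary statement that the indexing set $\cC(T)$ is \emph{finite}. Granting this, the set $\{\underline{S}^{-1}_{\mu_n}\UU(X)_{1/r}\mid \mu_n\in\cC(T)\}$ is simply the image of the assignment $\mu_n\mapsto \underline{S}^{-1}_{\mu_n}\UU(X)_{1/r}$ on a finite set, and is therefore finite. (It may help to recall here that, via the ring isomorphisms $\End_{\mathbb{S}\mathrm{ub}^G_0(X)_{1/r}}(\UU(X)_{1/r})\simeq G_0([X/T])_{1/r}\simeq K_0([X/T])_{1/r}$ of \S\ref{sec:subschemes}, each ind-object $\underline{S}^{-1}_{\mu_n}\UU(X)_{1/r}$ of $\mathrm{Ind}(\mathbb{S}\mathrm{ub}^G_0(X)_{1/r})$ is the localization of $\UU(X)_{1/r}$ at the multiplicative subset $\underline{S}_{\mu_n}\subset R(T)_{1/r}$, hence is determined by the localized $R(T)_{1/r}$-algebra $\underline{S}^{-1}_{\mu_n}K_0([X/T])_{1/r}$ together with its structure map; but this refinement is not needed for the present statement.)

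It therefore remains only to verify that $\cC(T)$ is finite. By the observation made just before the statement of Theorem~\ref{thm:stabilizers}, the set $\{n\in\bbN\mid \mu_n\in\cC(T)\}$ is finite, so it suffices to check that, for each fixed $n\geq 1$, only finitely many closed subgroup $k$-schemes of $T$ are isomorphic to $\mu_n$. This is elementary: since $T$ is diagonalizable, its closed subgroup $k$-schemes correspond contravariantly to the quotients of the finitely generated abelian group $T^\vee$, so a subgroup isomorphic to $\mu_n$ corresponds to a quotient of $T^\vee$ isomorphic to $\bbZ/n\bbZ$ --- in particular to a subgroup of $T^\vee$ of index $n$, of which there are only finitely many. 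Combining the two finiteness statements gives that $\cC(T)$ is finite, which proves the lemma. Alternatively --- in keeping with the overall strategy of reducing to $K_0$ --- one may observe, using Quillen's d\'evissage theorem \cite[\S5]{Quillen}, that $\cC(T)$ coincides with the indexing set of the finite decomposition of $G$-theory established by Vezzosi-Vistoli \cite[\S3]{VV}, so that its finiteness is part of their theorem.

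I do not expect a genuine obstacle: the content of the lemma is precisely the finiteness of $\cC(T)$, and all else is formal. The only points deserving attention are bookkeeping ones --- matching our $\cC(T)$ with the Vezzosi-Vistoli indexing set, if one takes that route, and, if one prefers to prove the finiteness of $\{n\mid \mu_n\in\cC(T)\}$ directly rather than quote it, stratifying $X$ (a smooth quasi-compact $k$-scheme, hence of finite type and in particular Noetherian) by stabilizer type so that only finitely many --- necessarily finite --- subgroup schemes of $T$ occur as stabilizers. For later use in the proof of Theorem~\ref{thm:stabilizers} it is worth noting in addition --- although it plays no role in the present lemma --- that $\underline{S}^{-1}_{\mu_n}\UU(X)_{1/r}$ vanishes whenever $X^{\mu_n}=\emptyset$; together with the finiteness of $\cC(T)$, this is what makes the direct sum in \eqref{eq:stabilizers} a finite biproduct indexed precisely by $\cC(T)$.
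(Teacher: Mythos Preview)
Your proof is correct but follows a different route from the paper's. The paper argues indirectly: it shows that an ind-object $\underline{S}^{-1}_{\mu_n}\UU(X)_{1/r}$ is trivial if and only if the $R(T)_{1/r}$-module $\underline{S}^{-1}_{\mu_n}K_0([X/T])_{1/r}$ vanishes (by computing $\Hom_{\mathrm{Ind}(\mathbb{S}\mathrm{ub}^G_0(X)_{1/r})}(\UU(X)_{1/r},-)$ and using that $\underline{S}^{-1}_{\mu_n}R(T)_{1/r}$ is a filtered colimit of finite free $R(T)_{1/r}$-modules), and then invokes Vezzosi--Vistoli \cite[Prop.~3.4(ii)]{VV} for the finiteness of the set of such $K_0$-modules. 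This is in keeping with the paper's overall strategy of reducing statements in $\mathrm{Ind}(\mathbb{S}\mathrm{ub}^G_0(X))$ to known $K_0$-level results.

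Your argument is more elementary and in fact proves the stronger statement that the indexing set $\cC(T)$ itself is finite: the paper has already recorded that $\{n\in\bbN\mid \mu_n\in\cC(T)\}$ is finite, and for each fixed $n$ the closed subgroups of $T$ isomorphic to $\mu_n$ correspond to index-$n$ subgroups of the finitely generated abelian group $T^\vee$, of which there are only finitely many. This bypasses both the $\Hom$-computation and the appeal to \cite{VV}. What the paper's approach buys is uniformity with the rest of the proof of Theorem~\ref{thm:stabilizers}, where the reduction-to-$K_0$ mechanism is genuinely needed (Proposition~\ref{prop:VV}); what your approach buys is a self-contained argument that also makes transparent why the direct sum in \eqref{eq:stabilizers} is finite --- a point the paper's formulation of the lemma (finiteness of the \emph{set} of ind-objects, rather than of the index set) leaves slightly implicit.
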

\begin{proof}
Note first that the ind-object $\underline{S}^{-1}_{\mu_n} \UU(X)_{1/r}$ is trivial if and only if we have $\mathrm{End}_{\mathrm{Ind}(\mathbb{S}\mathrm{ub}^G_0(X)_{1/r})}(\underline{S}^{-1}_{\mu_n} \UU(X)_{1/r})=0$. Recall that $\underline{S}_{\mu_n}^{-1}R(T)_{1/r}$ can be written as a filtered colimit of free finite $R(T)_{1/r}$-modules. Hence, by definition of the category $\mathrm{Ind}(\mathbb{S}\mathrm{ub}^G_0(X)_{1/r})$, the ind-object $\underline{S}^{-1}_{\mu_n} \UU(X)_{1/r}$ is trivial if and only if we have:
$$ \Hom_{\mathrm{Ind}(\mathbb{S}\mathrm{ub}^G_0(X)_{1/r})}(\UU(X)_{1/r}, \underline{S}^{-1}_{\mu_n} \UU(X)_{1/r})\simeq \underline{S}^{-1}_{\mu_n} K_0([X/T])_{1/r}=0\,.$$
The proof follows now from the fact that the following set of $\bbZ[1/r]$-modules $\{\underline{S}^{-1}_{\mu_n} K_0([X/T])_{1/r}\,|\, \mu_n \in \cC(T)\}$ is finite; see Vezzosi-Vistoli \cite[Prop.~3.4(ii)]{VV}.
\end{proof}
Consider the following canonical morphism of ind-objects:
\begin{equation}\label{eq:morph-loc}
\UU(X)_{1/r} \too \bigoplus_{\mu_n \in \cC(T)}\underline{S}^{-1}_{\mu_n} \UU(X)_{1/r}\,.
\end{equation}
Note that, thanks to Lemma \ref{lem:finite}, the direct sum on the right-hand side is finite.
\begin{proposition}\label{prop:VV}
The above morphism of ind-objects \eqref{eq:morph-loc} is invertible.
\end{proposition}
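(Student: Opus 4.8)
The plan is to adapt the argument from the proof of Theorem~\ref{thm:localization}: via the Yoneda lemma in $\mathrm{Ind}(\mathbb{S}\mathrm{ub}^G_0(X)_{1/r})$, reduce the invertibility of \eqref{eq:morph-loc} to an assertion about $K_0$-groups that has already been established by Vezzosi-Vistoli.

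First I would recall that the objects $\UU(Y)_{1/r}$, where $Y\stackrel{\tau}{\hookrightarrow} X$ ranges over the objects of $\mathbb{S}\mathrm{ub}^G(X)$, form a set of compact generators of $\mathrm{Ind}(\mathbb{S}\mathrm{ub}^G_0(X)_{1/r})$. Hence, in order to prove that \eqref{eq:morph-loc} is invertible, it suffices to prove that it becomes an isomorphism after application of the functor $\Hom_{\mathrm{Ind}(\mathbb{S}\mathrm{ub}^G_0(X)_{1/r})}(\UU(Y)_{1/r},-)$ for every such $Y$. Arguing exactly as in the proof of Lemma~\ref{lem:finite} — using that the direct sum in \eqref{eq:morph-loc} is finite, that each $\underline{S}^{-1}_{\mu_n}R(T)_{1/r}$ is a filtered colimit of finite free $R(T)_{1/r}$-modules, and that $\UU(Y)_{1/r}$ is a compact object — one obtains natural identifications
$$\Hom_{\mathrm{Ind}(\mathbb{S}\mathrm{ub}^G_0(X)_{1/r})}(\UU(Y)_{1/r},\UU(X)_{1/r})\simeq G_0([Y/T])_{1/r}\simeq K_0([Y/T])_{1/r}$$
(here one uses $\Hom_{\mathbb{S}\mathrm{ub}^G_0(X)}(Y_1,Y_2)\simeq G_0([(Y_1\times_X Y_2)/G])$ together with $Y\times_X X=Y$) and, for every $\mu_n\in\cC(T)$,
$$\Hom_{\mathrm{Ind}(\mathbb{S}\mathrm{ub}^G_0(X)_{1/r})}(\UU(Y)_{1/r},\underline{S}^{-1}_{\mu_n}\UU(X)_{1/r})\simeq \underline{S}^{-1}_{\mu_n}K_0([Y/T])_{1/r}\,.$$
Under these identifications, the morphism \eqref{eq:morph-loc} becomes the canonical homomorphism of $\bbZ[1/r]$-modules
$$K_0([Y/T])_{1/r}\too \bigoplus_{\mu_n\in\cC(T)}\underline{S}^{-1}_{\mu_n}K_0([Y/T])_{1/r}$$
whose $\mu_n$-component is the localization map at $\underline{S}_{\mu_n}$.

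It then remains to prove that this last homomorphism is invertible. Since $Y\hookrightarrow X$ is a $T$-stable closed immersion and the $T$-action on $X$ has finite (geometric) stabilizers, the induced $T$-action on $Y$ also has finite (geometric) stabilizers; hence Vezzosi-Vistoli's results \cite[\S3]{VV} apply to $Y$ and show that the localization maps assemble into an isomorphism $K_0([Y/T])_{1/r}\stackrel{\simeq}{\to}\bigoplus_{\mu_n\,:\,Y^{\mu_n}\neq\emptyset}\underline{S}^{-1}_{\mu_n}K_0([Y/T])_{1/r}$, a finite direct sum. Finally, the remaining summands, those indexed by the $\mu_n\in\cC(T)$ with $Y^{\mu_n}=\emptyset$, vanish: indeed, by Vezzosi-Vistoli's concentration theorem, $\underline{S}^{-1}_{\mu_n}K_0([Y/T])_{1/r}\simeq\underline{S}^{-1}_{\mu_n}K_0([Y^{\mu_n}/T])_{1/r}=0$. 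This establishes the proposition.

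As in the proof of Theorem~\ref{thm:localization}, the only genuine obstacle is the bookkeeping required to make the Yoneda reduction precise in $\mathrm{Ind}(\mathbb{S}\mathrm{ub}^G_0(X)_{1/r})$ — chiefly, checking that $\Hom(\UU(Y)_{1/r},-)$ commutes with the filtered colimits defining the ind-objects $\underline{S}^{-1}_{\mu_n}\UU(X)_{1/r}$ — together with the careful matching of the indexing set $\cC(T)$ appearing in \eqref{eq:morph-loc} with the a priori smaller set $\{\mu_n : Y^{\mu_n}\neq\emptyset\}$ that governs Vezzosi-Vistoli's decomposition of $K_0([Y/T])_{1/r}$; the substantive input is entirely supplied by \cite{VV}.
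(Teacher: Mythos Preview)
Your proof is correct and follows the same overall strategy as the paper --- reduce via Yoneda in $\mathrm{Ind}(\mathbb{S}\mathrm{ub}^G_0(X)_{1/r})$ to a $K_0$-level statement and then invoke Vezzosi--Vistoli --- but the reduction step is organised differently. The paper tests \eqref{eq:morph-loc} only against the single object $\UU(X)_{1/r}$ and appeals directly to \cite[Prop.~3.4(ii)]{VV} for $X$; this suffices because the resulting ring isomorphism $K_0([X/T])_{1/r}\simeq\bigoplus_{\mu_n}\underline{S}^{-1}_{\mu_n}K_0([X/T])_{1/r}$ produces a complete system of orthogonal idempotents in $\End(\UU(X)_{1/r})$, and the summand $e_n\cdot\UU(X)_{1/r}$ is then identified with $\underline{S}^{-1}_{\mu_n}\UU(X)_{1/r}$ (the elements of $\underline{S}_{\mu_n}$ already act invertibly on it). You instead test against every generator $\UU(Y)_{1/r}$, which is the ``honest'' Yoneda argument and avoids that implicit idempotent step, but at the cost of invoking \cite{VV} for each $T$-stable smooth closed subscheme $Y\hookrightarrow X$ and of reconciling the fixed indexing set $\cC(T)$ with the a priori smaller set $\{\mu_n:Y^{\mu_n}\neq\emptyset\}$ (your use of the concentration isomorphism to kill the extra summands is fine, since $S_{\mu_n}\subset\underline{S}_{\mu_n}$ as noted in the proof of Lemma~\ref{prop:mult}). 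Both routes work; the paper's is shorter, yours makes the Yoneda reduction more transparent.
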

\begin{proof}
Thanks to the Yoneda lemma, since $\underline{S}_{\mu_n}^{-1}R(T)_{1/r}$ can be written as a filtered colimit of free finite $R(T)_{1/r}$-modules, it suffices to show that \eqref{eq:morph-loc} becomes an isomorphism after application of the functor $\Hom_{\mathrm{Ind}(\mathbb{S}\mathrm{ub}^G_0(X))}(\UU(X)_{1/r},-)$. By definition of the category $\mathrm{Ind}(\mathbb{S}\mathrm{ub}^G_0(X)_{1/r})$, this latter claim is equivalent to the invertibility of the following homomorphism of $\bbZ[1/r]$-modules:
\begin{equation}\label{eq:VV}
K_0([X/T])_{1/r} \too \bigoplus_{\mu_n \in \cC(T)} \underline{S}^{-1}_{\mu_n} K_0([X/T])_{1/r}\,.
\end{equation}
As proved by Vezzosi-Vistoli in \cite[Prop.~3.4(ii)]{VV}, \eqref{eq:VV} is indeed invertible.
\end{proof}
Recall from \S\ref{rk:K0action} and \S\ref{rk:compatibility} that the functor $\Psi$ is compatible with $K_0$-actions and $R(T)$-actions, respectively. The same holds for its $\bbZ[1/r]$-linearization $\Psi_{1/r}$ and for the induced functor $\mathrm{Ind}(\Psi_{1/r})$ between the categories of ind-objects. Therefore, by applying this latter functor to \eqref{eq:morph-loc}, we obtain an isomorphism of ind-objects:
\begin{equation}\label{eq:final1}
\U([X/T])_{1/r} \stackrel{\simeq}{\too} \bigoplus_{\mu_n \in \cC(T)} \underline{S}^{-1}_{\mu_n} \U([X/T])_{1/r}\,.
\end{equation}
\begin{lemma}\label{prop:mult}
For every $\mu_n \in \cC(T)$, we have an isomorphism of ind-objects
\begin{equation}\label{eq:iso-2}
\underline{S}^{-1}_{\mu_n} \U([X/T])_{1/r} \stackrel{\simeq}{\too} \underline{S}^{-1}_{\mu_n} \U([X^{\mu_n}/T])_{1/r}
\end{equation}
induced by pull-back along the closed immersion $X^{\mu_n} \hookrightarrow X$.
\end{lemma}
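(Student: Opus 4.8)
The plan is to deduce Lemma~\ref{prop:mult} from Theorem~\ref{thm:localization} — applied with $G=T$, $H=\mu_n$, and the universal additive invariant $E=\U$ — by comparing the multiplicative set $S_{\mu_n}\subset R(T)$ that governs the concentration theorem with the multiplicative set $\underline{S}_{\mu_n}\subset R(T)_{1/r}$ that occurs in the Vezzosi--Vistoli decomposition. The key observation is that once $\underline{S}_{\mu_n}$ and $r$ have been inverted, every element of $S_{\mu_n}$ becomes automatically invertible; granting this, $\underline{S}^{-1}_{\mu_n}(-)_{1/r}$ is merely a further localization of $S^{-1}_{\mu_n}(-)_{1/r}$, and the asserted isomorphism is obtained by localizing the concentration isomorphism of Theorem~\ref{thm:localization}.

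First I would establish this comparison. Write $\phi\colon R(T)_{1/r}\to\bbZ[1/r][t]/\langle\Phi_n(t)\rangle$ for the $\bbZ[1/r]$-algebra homomorphism defining $\underline{S}_{\mu_n}$, so that $\underline{S}_{\mu_n}=\phi^{-1}(1)$. Let $\chi\in T^\vee$ be a character with non-trivial restriction to $\mu_n$, and let $d>1$ be the order of $\chi|_{\mu_n}$ in $\mu_n^\vee$; note $d\mid n\mid r$. Then $\omega:=\phi(\chi)$ is a primitive $d^{\mathrm{th}}$ root of unity, so $\prod_{i=1}^{d-1}(1-\omega^i)$ equals the value at $t=1$ of $\prod_{i=1}^{d-1}(t-\omega^i)=(t^d-1)/(t-1)=1+t+\cdots+t^{d-1}$, namely $d$. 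Since $d$ is invertible in $R(T)_{1/r}$, the element $d^{-1}\prod_{i=1}^{d-1}(1-\chi^i)$ lies in $R(T)_{1/r}$ and is sent by $\phi$ to $1$, hence it belongs to $\underline{S}_{\mu_n}$; as $1-\chi$ divides it in $R(T)_{1/r}$, the element $1-\chi$ becomes a unit in $\underline{S}^{-1}_{\mu_n}R(T)_{1/r}$. Therefore $S_{\mu_n}$, being generated by such elements $1-\chi$, maps into the units of $\underline{S}^{-1}_{\mu_n}R(T)_{1/r}$; equivalently, $\underline{S}^{-1}_{\mu_n}R(T)_{1/r}$ is canonically an $S^{-1}_{\mu_n}R(T)_{1/r}$-algebra, and for every $R(T)$-module $M$ one has $\underline{S}^{-1}_{\mu_n}M_{1/r}\simeq\underline{S}^{-1}_{\mu_n}\bigl((S^{-1}_{\mu_n}M)_{1/r}\bigr)$.

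Finally, I would invoke Theorem~\ref{thm:localization} (with $G=T$, $H=\mu_n$ and $E=\U$), which provides an isomorphism of ind-objects $\U(\iota^\ast)\colon S^{-1}_{\mu_n}\U([X/T])\stackrel{\simeq}{\too}S^{-1}_{\mu_n}\U([X^{\mu_n}/T])$ in $\mathrm{Ind}(\Hmo_0(k))$, induced by pull-back along $\iota\colon X^{\mu_n}\hookrightarrow X$. Changing coefficients along $\bbZ\to\bbZ[1/r]$ and then applying the further localization $-\otimes_{S^{-1}_{\mu_n}R(T)_{1/r}}\underline{S}^{-1}_{\mu_n}R(T)_{1/r}$ — which is legitimate by the previous paragraph and, being extension of scalars along a filtered colimit of finitely generated free modules, preserves isomorphisms of ind-objects — yields the asserted isomorphism $\underline{S}^{-1}_{\mu_n}\U([X/T])_{1/r}\stackrel{\simeq}{\too}\underline{S}^{-1}_{\mu_n}\U([X^{\mu_n}/T])_{1/r}$, still induced by $\U(\iota^\ast)$, \ie by pull-back along $X^{\mu_n}\hookrightarrow X$. (Equivalently, one may run the argument inside $\mathbb{S}\mathrm{ub}^T_0(X)_{1/r}$, parallel to the proof of Proposition~\ref{prop:VV}: applying $\Hom_{\mathrm{Ind}(\mathbb{S}\mathrm{ub}^T_0(X)_{1/r})}(\UU(X)_{1/r},-)$ turns the morphism under consideration into $\underline{S}^{-1}_{\mu_n}K_0([X/T])_{1/r}\to\underline{S}^{-1}_{\mu_n}K_0([X^{\mu_n}/T])_{1/r}$, invertible by the $K_0$-case of Theorem~\ref{thm:localization}, \ie Thomason~\cite{Thomason}, together with the same comparison, and one transports it through $\mathrm{Ind}(\Psi_{1/r})$ using that $\Psi$ is compatible with pull-backs, $K_0$-actions and $R(T)$-linearity.) The main obstacle — in fact the only step that is not purely formal — is the cyclotomic comparison of the two multiplicative sets in the second paragraph; the remainder is the routine ind-object and localization bookkeeping already used for Theorems~\ref{thm:localization} and~\ref{thm:roots}, so I do not anticipate any further difficulty.
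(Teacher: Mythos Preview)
Your proposal is correct and follows essentially the same approach as the paper: apply Theorem~\ref{thm:localization} with $G=T$, $H=\mu_n$, then observe that every generator $1-\chi$ of $S_{\mu_n}$ becomes invertible in $\underline{S}^{-1}_{\mu_n}R(T)_{1/r}$, and further localize. The only difference is that you spell out the cyclotomic computation showing $1-\chi$ divides an element of $\underline{S}_{\mu_n}$, whereas the paper cites this fact from the proof of \cite[Lem.~3.6]{VV}.
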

\begin{proof}
In order to simplify the exposition, let us still denote by $S_{\mu_n}$ the image of the multiplicative set $S_{\mu_n} \subset R(T)$ (see \S\ref{sec:intro}) in the $\bbZ[1/r]$-linearized representation ring $R(T)_{1/r}$. Thanks to Theorem \ref{thm:localization}, we have an isomorphism of ind-objects
\begin{equation}\label{eq:iso-1}
S^{-1}_{\mu_n} \U([X/T])_{1/r} \stackrel{\simeq}{\too} S^{-1}_{\mu_n} \U([X^{\mu_n}/T])_{1/r}
\end{equation}
induced by pull-back along the closed immersion $X^{\mu_n} \hookrightarrow X$. Let $\chi \in T^\vee$ be a character of $T$ whose restriction to $\mu_n$ is non-trivial. As explained by Thomason in the proof of \cite[Lem.~3.6]{VV}, the image of $1- \chi$ under the $\bbZ[1/r]$-algebra homomorphism $R(T)_{1/r} \to \underline{S}^{-1}_{\mu_n} R(T)_{1/r}$ is invertible. Consequently, we obtain an induced $R(T)_{1/r}$-linear homomorphism $S^{-1}_{\mu_n}R(T)_{1/r} \to \underline{S}^{-1}_{\mu_n} R(T)_{1/r}$. Therefore, by applying the functor $-\otimes_{S^{-1}_{\mu_n} R(T)_{1/r}} \underline{S}^{-1}_{\mu_n} R(T)_{1/r}$ to \eqref{eq:iso-1}, we obtain the searched isomorphism of ind-objects \eqref{eq:iso-2}.
\end{proof}
Finally, by combining \eqref{eq:final1} with \eqref{eq:iso-2}, we obtain an isomorphism of ind-objects
$$ \U([X/T])_{1/r} \stackrel{\simeq}{\too} \bigoplus_{\mu_n \in \cC(T)}\underline{S}^{-1}_{\mu_n} \U([X^{\mu_n}/T])_{1/r}$$
induced by pull-back along the closed immersions $X^{\mu_n} \hookrightarrow X$. This proves Theorem \ref{thm:stabilizers} in the case of the universal additive invariant. The general case follows now from the equivalence of categories \eqref{eq:equivalence1} and from the fact that every additive functor $\Hmo_0(k) \to \mathrm{D}$, with values in a $\bbZ[1/r]$-linear category, extends naturally to a $\bbZ[1/r]$-linear functor $\Hmo_0(k)_{1/r} \to \mathrm{D}$ and, consequently, to a $\bbZ[1/r]$-linear functor $\mathrm{Ind}(\Hmo_0(k)_{1/r}) \to \mathrm{Ind}(\mathrm{D})$.


\begin{thebibliography}{00}

\bibitem{Woods} M.~Atiyah and R.~Bott, {\em Report on the Woods Hole fixed-point seminar (1964)}. In Raoul Bott's collected papers, Volume {\bf 5}, Springer, Basel.

\bibitem{SGA4} M. Artin, A. Grothendieck and J-L.~Verdier, {\em Th\'eorie des topos et cohomologie \'etale des sch\'eemas}. Lecture Notes
in Math. {\bf 269}, {\bf 270} and {\bf 305}, Springer-Verlag (1972/73).

\bibitem{AM} M. Artin and B. Mazur, {\em \'Etale homology}. Lecture Notes in Math. {\bf 100}, Springer- Verlag (1969).

\bibitem{BO} A.~Bondal and D.~Orlov, {\em Semiorthogonal decomposition for algebraic varieties}. Available at arXiv:alg-geom/9506012.

\bibitem{MRXXX1} A.~Bondal and M.~Van den Bergh, {\em Generators  and representability of functors in commutative and noncommutative geometry}. Moscow Mathematical Journal. {\bf 3} (2003), no.~1, 1--36.

\bibitem{Brion1} M.~Brion, {\em Points entiers dans les poly\`edres convexes}. Ann. Sci. \'Ecole Norm. Sup. {\bf 21} (1988), no.~4, 653--663.

\bibitem{Demazure} M.~Demazure, {\em Sur la formule des caract\`eres de H. Weyl}. Inv. Math. {\bf 9} (1969/1970), 249--252. 

\bibitem{HNR} J.~Hall, A.~Neeman and D.~Rydh, {\em One positive and two negative results for derived categories of algebraic stacks}. Available at arXiv:1405.1888.

\bibitem{HR} J.~Hall and D.~Rydh, {\em Algebraic groups and compact generation of their derived categories of representations}. Indiana Univ. Math. J. {\bf 64} (2015), no.~6, 1903--1923. 

\bibitem{Lars} L.~Hesselholt, {\em Periodic topological cyclic homology and the Hasse-Weil zeta function}. An Alpine Bouquet of Algebraic Topology (Saas Almagell, Switzerland, 2016), pp. 157--180, Contemp. Math. {\bf 708}, Amer. Math. Soc., Providence, RI., 2018.

\bibitem{Kassel} C.~Kassel, {\em Cyclic homology, comodules, and mixed complexes}. J. Algebra {\bf 107} (1987), no.~1, 195--216.

\bibitem{ICM-Keller} B.~Keller, {\em On differential graded categories}. International Congress of Mathematicians (Madrid), Vol.~II,  151--190. Eur.~Math.~Soc., Z{\"u}rich (2006).

\bibitem{Exact} \bysame, {\em On the cyclic homology of exact categories}. J. Pure Appl. Algebra {\bf 136} (1999), no. 1, 1--56.

\bibitem{NS} T. Nikolaus and P. Scholze, {\em On topological cyclic homology}. Available at arXiv:1707.01799.

\bibitem{Nielsen} A.~Nielsen, {\em Diagonalizably linearized coherent sheaves}. Bull. Soc. Math. France {\bf 102} (1974), 85--97.

\bibitem{Quillen} D.~Quillen, {\em Higher algebraic K-theory. I}. Algebraic K-theory, I: Higher K-theories (Proc. Conf., Battelle Memorial Inst., Seattle, Wash., 1972), Springer, Berlin, 1973, pp. 85--147. Lecture Notes in Math., Vol. {\bf 341}.

\bibitem{Segal} G.~Segal, {\em The representation ring of a compact Lie group}. Inst. Hautes \'Etudes Sci. Publ. Math. no.~{\bf 34} (1968), 113--128.

\bibitem{book} G.~Tabuada, {\em Noncommutative Motives}. With a preface by Yuri I. Manin. University Lecture Series, {\bf 63}. American Mathematical Society, Providence, RI, 2015.

\bibitem{CD} \bysame, {\em On Grothendieck's standard conjectures of type $C$ and $D$ in positive characteristic}. Available at arXiv:1710.04644.

\bibitem{finite} \bysame, {\em Noncommutative motives in positive characteristic and their applications}. Available at arXiv:1707.04248.

\bibitem{Additive} G.~Tabuada and M.~Van den Bergh, {\em Additive invariants of orbifolds}. Geometry and Topology, {\bf 22} (2018), 3003--3048.

\bibitem{Thomason} R.~Thomason, {\em Une formule de Lefschetz en $K$-th\'eorie \'equivariante alg\'ebrique}. Duke Math. J. {\bf 68} (1992), no.~3, 447--462. 

\bibitem{VV} G.~Vezzosi and A.~Vistoli, {\em Higher algebraic $K$-theory of group actions with finite stabilizers}. Duke Math. J. {\bf 113} (2002), no.~1, 1--55.

\bibitem{Weyl} H.~Weyl, {\em Theorie der Darstellung kontinuierlicher halbeinfacher Gruppen durch lineare Transformationen I, II, III}. Math. Z. {\bf 23} (1925) and Math. Z. {\bf 24} (1926).
\end{thebibliography}
\end{document}

\end{proof}